\newtheorem{theorem}{Theorem}[section]
\newtheorem{lemma}[theorem]{Lemma}
\newtheorem{proposition}[theorem]{Proposition}
\theoremstyle{definition}
\newtheorem{definition}[theorem]{Definition}
\theoremstyle{remark}
\newtheorem{remark}[theorem]{Remark}
\numberwithin{equation}{section}
\newcommand{\Div}{\nabla\cdot}
\newcommand{\N}{\mathbb{N}}
\newcommand{\Z}{\mathbb{Z}}
\newcommand{\R}{\mathbb{R}}
\newcommand{\C}{\mathbb{C}}
\DeclareMathOperator{\re}{Re}
\DeclareMathOperator{\im}{Im}
\newcommand{\scalar}[1]{\langle #1\rangle}
\newcommand\zds{\Z^d\setminus\{0\}}
\newcommand\natu{\N_0}
\newcommand{\vep}[1][\epsilon]{v_{#1}}
\def\MRnum#1 #2\empty{#1}
\newcommand{\MRhref}[2]{%
  \href{http://www.ams.org/mathscinet-getitem?mr=#1}{#2}
}
\newcommand{\MR}[1]{%
  \relax\ifhmode\unskip\space\fi
  \MRhref{\MRnum#1\empty}{\texttt{\tiny[MR\MRnum#1\empty]}}
}
\newcommand{\arxiv}[1]{\href{http://arxiv.org/abs/#1}{arXiv:#1}}
\title{Global regularity for a slightly supercritical hyperdissipative Navier--Stokes system}
\author{
  D.~Barbato\\
    \small Universit\`a di Padova\\[-.2em]
    \scriptsize\texttt{barbato@math.unipd.it}
  \and
  F.~Morandin\\
    \small Universit\`a di Parma\\[-.2em]
    \scriptsize\texttt{francesco.morandin@unipr.it}
  \and
  M.~Romito\\
    \small Universit\`a di Pisa\\[-.2em]
    \scriptsize\texttt{romito@dm.unipi.it}
}
\begin{document}
\maketitle
\begin{abstract}
  We prove global existence of smooth solutions
  for a slightly supercritical hyperdissipative
  Navier--Stokes under the optimal condition
  on the correction to the dissipation.
  This proves a conjecture formulated by
  Tao \cite{Tao2009}.
\end{abstract}
%%
%%
%%
%%%%%%%%%%%%%%%%%%%%%%%%%%%%%%%%%%%%%%%%%%%%%%%%%%%%%%%%%%%%%%%%%%%%%%%%%%%%%%%%
\section{Introduction}

Let $d\geq 3$
and consider the generalized Navier--Stokes system
\begin{equation}\label{e:gnse}
  \begin{cases}
    \frac{\partial u}{\partial t} + (u\cdot\nabla)u + \nabla p + D_0^2 u
      = 0,\\
    \Div u
      = 0,\\
    \int_{[0,2\pi]^d} u(t,x)\,dx
      = 0,
  \end{cases}
\end{equation}
on $[0,2\pi]^d$ with periodic boundary conditions,
where $D_0$ is a Fourier multiplier with non--negative symbol
$m$. The Navier--Stokes system is recovered when
$m(k) = |k|$. If
\begin{equation}\label{e:asstao1}
  m(k)
    \geq c\frac{|k|^{\frac{d+2}4}}{G(|k|)},
\end{equation}
where $G:[0,\infty)\to[0,\infty)$ is a non--decreasing
function such that
\begin{gather}
  \int_1^\infty\frac{ds}{s G(s)^4}
    =\infty,
    \label{e:asstao2}\\
  \frac{G(x)}{|x|^{\frac{d+2}4}}
    \text{ eventually non--increasing},
    \label{e:asstao3}
\end{gather}
then in \cite{Tao2009} it is proved\footnote{The proof of the result
    of \cite{Tao2009} is
    given in $\R^d$, but it can be easily extended to the
    periodic setting, see \cite[Remark 2.1]{Tao2009}.}
that \eqref{e:gnse} has a global smooth solution for every
smooth initial condition. The result has been later extended
to the two dimensional case by \cite{KatTap2012}.

A heuristic argument developed in \cite{Tao2009} and
based on the comparison between the speed of
propagation of a (possible) blow--up and the rate
of dissipation suggests that regularity should still
hold under the weaker condition
\begin{equation}\label{e:assbmr}
  \int_1^\infty\frac{ds}{s G(s)^2}
    =\infty.
\end{equation}
The main result of this paper, contained in the
following theorem, is a complete proof of this
conjecture.
\begin{theorem}\label{t:main1}
  Let $d\geq 2$ and assume \eqref{e:asstao1}, \eqref{e:asstao3}
  and \eqref{e:assbmr} for a non--decreasing function
  $G:[0,\infty)\to[0,\infty)$. Then \eqref{e:gnse} has a
  global smooth solution for every smooth initial condition.
\end{theorem}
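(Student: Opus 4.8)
The plan is to deduce Theorem~\ref{t:main1} from an \emph{a priori estimate}. Since \eqref{e:asstao1} makes the dissipation at least critical, the standard energy/contraction scheme produces from any smooth mean-zero divergence-free datum a unique smooth solution on a maximal interval $[0,T^*)$, with the continuation criterion that $T^*<\infty$ forces $\limsup_{t\uparrow T^*}\|u(t)\|_{\dot H^s}=\infty$ for every $s$ above the scaling-critical exponent. The scaling of \eqref{e:gnse} with $m(k)\simeq|k|^{(d+2)/4}$ puts criticality exactly at $L^2$, and there the energy identity
\begin{equation*}
  \|u(t)\|_{L^2}^2 + 2\int_0^t\|D_0u(s)\|_{L^2}^2\,ds = \|u_0\|_{L^2}^2
\end{equation*}
(obtained by testing \eqref{e:gnse} against $u$ and using $\Div u=0$) gives control. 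So it suffices to upgrade this critical bound to a subcritical one, uniform on bounded time intervals; by the continuation criterion and parabolic smoothing for the hyperdissipative semigroup this then propagates to every $\dot H^s$ and to global smoothness.

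To run the upgrade, fix dyadic frequencies $\lambda_n=2^n$, Littlewood--Paley projections $u_n$ and amplitudes $b_n(t)=\|u_n(t)\|_{L^2}$. Pairing \eqref{e:gnse} with $u_n$ gives, the pressure dropping out by incompressibility,
\begin{equation*}
  \tfrac12\tfrac{d}{dt}b_n^2 = -\|D_0u_n\|_{L^2}^2 + F_n,
\end{equation*}
and two structural facts will be used throughout. First, by \eqref{e:asstao1} the dissipation controls its natural scale, $\|D_0u_n\|_{L^2}^2\ge c_0\delta_nb_n^2$ with $\delta_n\simeq\lambda_n^{(d+2)/2}G(\lambda_n)^{-2}$, while \eqref{e:asstao3} makes $n\mapsto\delta_n\lambda_n^{-(d+2)/2}$ eventually monotone, so $\delta_n$, $G(\lambda_n)$ and any mild weight are comparable over $O(1)$-ranges of shells. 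Second, $F_n$ is \emph{quasilocal and conservative}: the Bony decomposition of $(u\cdot\nabla)u$ together with the cancellation $\langle(v\cdot\nabla)u_n,u_n\rangle=0$ for divergence-free $v$ shows that, after absorbing the low-mode transport into harmless commutators, $|F_n|$ is a sum of triple products of amplitudes over frequency triples whose two largest entries differ by $O(1)$, and $\sum_nF_n=0$; the nonlinearity merely transports energy between neighbouring shells.

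Now fix a small $s>0$ and set $w_n=\exp\!\bigl(\kappa\sum_{j\le n}G(\lambda_j)^{-2}\bigr)$, which by \eqref{e:assbmr} increases to $+\infty$ and satisfies $w_n/w_{n-1}=1+\kappa G(\lambda_n)^{-2}+O(\kappa^2)$. Consider the weighted, slightly subcritical energy $Y(t)=\sum_nw_n\lambda_n^{2s}b_n(t)^2$. Multiplying the shell balances by $w_n\lambda_n^{2s}$ and summing, the dissipation contributes $-2c_0\sum_nw_n\lambda_n^{2s}\delta_nb_n^2$, while $\sum_nw_n\lambda_n^{2s}F_n$, after a discrete summation by parts that uses $\sum_nF_n=0$, produces a main term of size $\simeq\kappa\sum_nw_n\lambda_n^{2s}G(\lambda_n)^{-2}\,(\text{flux across level }n)$ plus genuinely lower-order pieces (the commutator and backscatter contributions, absorbed by the dissipation of the lower shells and by the energy identity). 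The crux is that each term of this main sum is dominated by a small multiple of the corresponding dissipative gain $w_n\lambda_n^{2s}\delta_nb_n^2=w_n\lambda_n^{2s+(d+2)/2}G(\lambda_n)^{-2}b_n^2$; one then closes a continuity argument on $[0,T^*)$ — assume $Y\le 2Y(0)$ plus a locally bounded function of $t$, recover the strictly better bound — obtaining $\sup_{t<T^*}Y(t)<\infty$ whenever $T^*<\infty$, hence $\sup_{t<T^*}\|u(t)\|_{\dot H^s}<\infty$, a contradiction. Therefore $T^*=\infty$.

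The main obstacle is exactly the absorption step just described: the flux across level $n$ must be controlled by a small fraction of $\delta_nb_n^2$ and lower-shell quantities \emph{without} losing a spurious factor $G(\lambda_n)^2$. A crude flux estimate loses precisely such a factor and closes only under Tao's condition \eqref{e:asstao2}; reaching the optimal \eqref{e:assbmr} requires exploiting quasilocality to the hilt, so that shell $n$ is fed only by $b_{n-1},b_n,b_{n+1}$ rather than by a long high-frequency tail, and, in effect, rendering quantitative the comparison between the propagation speed of a possible blow-up and the dissipation rate that is recalled in the Introduction. The divergent weights $w_n$ are the device that encodes \eqref{e:assbmr} into the energy method: their unbounded growth is what forbids $Y$ from concentrating at high frequencies, and calibrating $\kappa$ so that the incremental weight $\kappa w_nG(\lambda_n)^{-2}$ is dominated shell by shell by the dissipative gain is the heart of the proof.
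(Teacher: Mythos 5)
Your setup --- local well-posedness with a blow-up criterion, reduction to shell amplitudes $b_n$, and the observation that the nonlinear shell interactions are quasilocal (the two largest frequencies differ by $O(1)$) and antisymmetric so that $\sum_n F_n=0$ --- matches the paper's Appendix~\ref{s:localexuniq} and Section~\ref{s:reduction}. The gap is in the closing step. Your weighted energy $Y=\sum_n w_n\lambda_n^{2s}b_n^2$ with $w_n=\exp\bigl(\kappa\sum_{j\le n}G(\lambda_j)^{-2}\bigr)$ requires, as you state, that the flux into shell $n$ be absorbed shell by shell into the dissipation $\lambda_n^{(d+2)/2}G(\lambda_n)^{-2}b_n^2$. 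Even with perfect quasilocality the flux is of size $\bigl(\sum_{l<n}\lambda_l^{(d+2)/2}b_l\bigr)b_n^2$, so absorption demands $\sum_{l<n}\lambda_{l}^{(d+2)/2}b_l\ll \lambda_n^{(d+2)/2}G(\lambda_n)^{-2}$, i.e.\ a pointwise smallness of $b_n$ relative to $G(\lambda_n)^{-2}$. The bootstrap hypothesis $Y\le 2Y(0)$ only yields $b_n\lesssim \sqrt{Y(0)}\,w_n^{-1/2}\lambda_n^{-s}$, which beats $G(\lambda_n)^{-2}$ only for $n$ beyond a threshold $N_0$ depending on the bootstrap constant; the shells $n<N_0$ must then be handled by Gr\"onwall and feed back a time-growing term whose size depends on $N_0$, so the continuity argument closes only on a short time interval --- this is exactly local, not global, existence. ``Exploiting quasilocality to the hilt'' does not remove this circularity: quasilocality is already fully used in Tao's argument, and a shell-by-shell absorption with summable gains is precisely what forces the stronger condition \eqref{e:asstao2}. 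You flag this as ``the main obstacle'' but supply no mechanism that overcomes it under the weaker hypothesis \eqref{e:assbmr}.

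The paper's route is structurally different and never absorbs flux into dissipation pointwise in $n$. It derives a recursive inequality for the \emph{tails}: with $d_n^2(t)=F_n(t)+\int_0^t\sum_{h\ge n}\chi_hX_h^2$, one has $d_n^2(t)\le F_n(0)+c_4Q_nR_{n-2}(t)$ (Proposition~\ref{p:d_recursion}), where $Q_n$ is a discounted sum of past amplitudes and $R_n$ a weighted tail of dissipation increments. The decisive use of \eqref{e:assbmr} is \emph{integrated over blocks of shells}: since $\sum_nG(2^{n+1})^{-1}=\infty$ (for Theorem~\ref{t:main1}, $g=G^2$), inequality \eqref{e:Rformula} forces $\min_{m_1\le n\le m_2}R_n$ to be small over long blocks, which yields $\bar d_n\downarrow0$ and $Q_n\to0$ without any smallness assumption, and then a double-induction ``cascade'' along a sparse sequence $n_k$ (Lemmas~\ref{l:initial}--\ref{l:indices}) upgrades this to super-exponential decay $\bar d_n^2\le c_M\lambda^{-Mn}$ for every $M$, hence smoothness. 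To repair your argument you would need to replace the shell-by-shell absorption with some such block-averaged use of the divergence of $\sum G(2^n)^{-2}$; as written, the proof does not go through.
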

A simple version of this conjecture, when reformulated on
a toy model, has been proved for the dyadic model in
\cite{BarMorRom2014}.
Actually, for that model one could prove regularity in full
supercritical regime, with $m(k)=|k|$, as was done
in~\cite{BarMorRom2011}, but it was natural to develop there some of
the main ideas on which also this paper is based.
In fact here we prove that the equations for the velocity can be
reduced to a suitable dyadic--like model, with infinitely many
interactions though. A more sophisticated version of the arguments
of \cite{BarMorRom2014} ensures regularity of this dyadic model and,
in turns, of the solution of the problem \eqref{e:gnse} above.

Our technique for proving Theorem~\ref{t:main1} is flexible
enough to include an additional critical parameter.
Consider the following generalized Leray $\alpha$--model,
\begin{equation}\label{e:glanse}
  \begin{cases}
    \frac{\partial v}{\partial t} + (u\cdot\nabla)v + \nabla p + D_1 v
      = 0,\\
    v
      = D_2 u,\\
    \Div v
      = 0,\\
    \int_{[0,2\pi]^d} v(t,x)\,dx
      = \int_{[0,2\pi]^d} u(t,x)\,dx
      = 0,
  \end{cases}
\end{equation}
where $D_1$ and $D_2$ are Fourier multipliers with
non--negative symbols $m_1$ and $m_2$.
\begin{theorem}\label{t:main2}
  Let $d\geq2$, $\alpha,\beta\geq0$, and assume
  \[
    m_1(k)
      \geq c\frac{|k|^\alpha}{g(|k|)},
        \qquad
    m_2(k)
      \geq c|k|^\beta,
        \qquad
    \alpha+\beta\geq\frac{d+2}2,
  \]
  where $g:[0,\infty)\to[0,\infty)$ is
  a non--decreasing function such that $x^{-\alpha}g(x)$
  is eventually non--increasing, and
  \begin{equation}\label{e:assbmrtrue}
    \int_1^\infty\frac{ds}{s g(s)}
      = \infty.
  \end{equation}
  Then \eqref{e:glanse} has a global smooth solution
  for every smooth initial condition.
\end{theorem}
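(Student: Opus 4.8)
The plan is to deduce Theorem~\ref{t:main2} from an a priori bound, uniform on every finite time interval, that controls a subcritical norm of $u$ (equivalently of $v=D_2u$) and hence, via a standard continuation criterion, promotes the local smooth solution to a global one; following \cite{Tao2009} and the dyadic--model philosophy of \cite{BarMorRom2011,BarMorRom2014}, the core is to compare \eqref{e:glanse} with a dyadic--like shell model whose dissipation, under $\alpha+\beta\ge\frac{d+2}2$ and \eqref{e:assbmrtrue}, is just barely strong enough to forbid a finite--time cascade of energy to arbitrarily high frequencies. First I would record that for smooth periodic mean--zero data \eqref{e:glanse} has a unique local smooth solution, global provided $\sup_{[0,T)}\|u(t)\|_{H^s}<\infty$ for one large $s$. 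Since $m_2(k)\ge c|k|^\beta>0$ for $k\neq 0$, $\Div v=0$ forces $\Div u=0$; pairing the first equation of \eqref{e:glanse} with $v$ then kills the pressure and the transport term, while $D_1$ contributes nonnegatively, giving
\[
  \|v(t)\|_{L^2}^2+2\int_0^t\|D_1^{1/2}v(\tau)\|_{L^2}^2\,d\tau=\|v(0)\|_{L^2}^2 ,
\]
which is the only coercive quantity available a priori; every later estimate must be made to run on it.

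\textbf{Step 2: reduction to a shell model.} Decompose $u=\sum_h u_h$, $v=\sum_h v_h=\sum_h D_2u_h$ into Littlewood--Paley shells and set $a_h(t)=\|v_h(t)\|_{L^2}$. Projecting the equation onto shell $h$ and pairing with $v_h$ gives $\tfrac12\tfrac{d}{dt}a_h^2+\|D_1^{1/2}v_h\|_{L^2}^2=-\sum_{j,k}\langle(u_j\cdot\nabla)v_k,v_h\rangle$. Using $\Div u=0$ to move the gradient onto the lowest of the three frequencies, Bernstein's inequalities, and $\|u_j\|_{L^2}\le C\,2^{-\beta j}a_j$ (from $m_2(k)\ge c|k|^\beta$), the paraproduct structure leaves only near--diagonal and low--high interactions and bounds the right--hand side, schematically, by $\sum_{j,k}c_{h,j,k}\,a_j a_k a_h$, with $c_{h,j,k}$ concentrated near $h\simeq\max(j,k)$ and of size at most $C\,2^{\gamma h}$, $\gamma=1+\tfrac d2-\beta$, while $\|D_1^{1/2}v_h\|_{L^2}^2\ge c\,m_1(2^h)a_h^2\ge c'\,2^{\alpha h}g(2^h)^{-1}a_h^2$. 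The hypothesis $\alpha+\beta\ge\frac{d+2}2$ is exactly the inequality $\gamma\le\alpha$, i.e.\ the dissipation exponent dominates the cascade exponent; hence $(a_h^2)_h$ is a sub--solution of an abstract dyadic model with dissipation rates $\mu_h^2=2^{\alpha h}g(2^h)^{-1}$ — critical when $g$ is bounded, logarithmically supercritical otherwise — now carrying countably many local couplings, which is the main new feature compared with the finitely--interacting toy models of \cite{BarMorRom2014}.

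\textbf{Step 3: regularity of the shell model (the crux).} It remains to show that no such dyadic model can blow up, and here the work concentrates. A blow--up at time $T$ would force the fixed energy budget of Step~1 to be carried by the cascade to shells $h\to\infty$ in finite time while being dissipated at rate $\mu_h^2$ on shell $h$. Estimating the cascade terms crudely (in absolute value) would only recover the weaker, Tao--type condition $\int_1^\infty\frac{ds}{s\,g(s)^2}=\infty$; to reach the optimal \eqref{e:assbmrtrue} one must exploit the fine structure of the cascade — its locality, its approximate preservation of energy, and the positivity/monotonicity mechanisms of \cite{BarMorRom2011,BarMorRom2014} — through a logarithmically weighted energy functional $\sum_h w_h a_h^2$ whose weights $w_h$ are built from the primitive of $\frac1{s\,g(s)}$. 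One shows that this functional obeys a Gronwall--type inequality on $[0,T)$ precisely because $\int_1^\infty\frac{ds}{s\,g(s)}=\infty$ is the borderline making the weight admissible, the eventual monotonicity of $x^{-\alpha}g(x)$ being used to compare dissipation rates across shells uniformly. Managing the infinitely many couplings, controlling the signs of the cascade terms, and closing this weighted estimate is the principal obstacle; the remaining steps are essentially bookkeeping.

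\textbf{Step 4: conclusion.} The weighted bound of Step~3 yields a uniform--on--$[0,T)$ Sobolev bound on $v$, hence on $u=D_2^{-1}v$, which by Step~1 produces a global smooth solution, proving Theorem~\ref{t:main2}. As a byproduct one recovers Theorem~\ref{t:main1} by taking $D_2=\mathrm{Id}$ (so $\beta=0$, $v=u$), $D_1=D_0^2$ and $g=G^2$: then $\alpha=\frac{d+2}2$, $\alpha+\beta=\frac{d+2}2$, \eqref{e:asstao1} gives $m_1(k)=m(k)^2\ge c^2|k|^\alpha g(|k|)^{-1}$, \eqref{e:asstao3} supplies the required eventual monotonicity of $x^{-\alpha}g(x)$, and \eqref{e:assbmr} is precisely \eqref{e:assbmrtrue}.
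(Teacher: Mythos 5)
Your overall architecture---local well-posedness plus a continuation criterion, reduction to a dyadic-like shell model built from square-averaged Littlewood--Paley pieces, and the observation that $\alpha+\beta\ge\frac{d+2}2$ is exactly the statement that the dissipation rate $2^{\alpha n}g(2^n)^{-1}$ dominates the cascade coefficient $2^{(1+\frac d2-\beta)\cdot(\text{lowest shell})}$---matches the paper's Section~\ref{s:reduction} and Appendix~\ref{s:localexuniq}, and your Step~4 derivation of Theorem~\ref{t:main1} is the paper's. But your Step~3, which you yourself identify as the crux, is not a proof: you assert that a ``logarithmically weighted energy functional $\sum_h w_h a_h^2$, with $w_h$ built from the primitive of $\frac1{sg(s)}$, obeys a Gronwall-type inequality,'' without writing that inequality, without explaining how the antisymmetric (formally conservative) structure of the couplings survives multiplication by a non-constant weight (the commutator terms this creates are precisely of the size that crude absolute-value estimates cannot absorb under \eqref{e:assbmrtrue} alone, as you concede), and without explaining how a bound on one weighted $L^2$ quantity would yield $\sup_n 2^{mn}X_n<\infty$ for \emph{every} $m$, which is what smoothness requires. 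A further small inaccuracy: the interaction set also contains high-high-to-low triples in which the equation index is far below the other two, so the couplings are not concentrated near $h\simeq\max(j,k)$; what matters is that the coefficient is controlled by $2^{\gamma\min\{l,m,n\}}$.

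The paper's actual mechanism is different and does not involve weights or Gronwall at all. It works with the tails $F_n=\sum_{k\ge n}X_k^2$ and the combined energy-plus-dissipation quantities $d_n^2$, proves the recursive inequality $d_n^2(t)\le F_n(0)+c_4\,Q_nR_{n-2}(t)$ of Proposition~\ref{p:d_recursion} (where $Q_n$ is a geometric average of the $\bar d_l$, $l<n$, and $R_n$ a geometric average of dissipation increments), and uses the divergence $\sum_n g(2^{n+1})^{-1}=\infty$ only through the elementary bound \eqref{e:Rformula}, which says that over any block of shells the minimum of $R_n$ is at most $d_{m_1}^2/\sum b_n$. This permits the extraction of a sparse sequence $(n_k)$ of shells where the flux into the tail is quantitatively small, and a two-stage induction (Lemma~\ref{l:initial}, Lemma~\ref{l:indices}, and the cascade recursion) then yields the supergeometric decay $\bar d_n^2\le c_M\lambda^{-Mn}$ for every $M$. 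To close your argument you would need either to carry out such a tail recursion or to actually produce and close your weighted estimate; as written, the decisive step of the theorem is assumed rather than proved.
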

Under the assumptions of Theorem~\ref{t:main1}, if
$\beta=0$, $\alpha=\frac{d+2}{2}$, $g(x) = G(x)^2$,
$m_2(k)=1$, and $m_1(k)=m(k)^2$, then the assumptions
of Theorem~\ref{t:main2} are met. Therefore 
Theorem~\ref{t:main1} follows immediately from
Theorem~\ref{t:main2}, and it is sufficient to prove only
the second result.

Our results hold as well when the problems are considered
in $\R^d$, since in our method large scales play no
significant role (see Remark~\ref{r:erredi}).

The model~\eqref{e:glanse} with $g\equiv1$ was introduced by Olson and
Titi in~\cite{OlsTit2007}. They proposed the idea that a weaker
non-linearity and a stronger viscous dissipation could work together
to yield regularity. Their statement uses though a stronger hypothesis
$\alpha+\frac{\beta}{2}\geq\frac{d+2}{2}$ and this result was later
logarithmically improved in~\cite{Yam2012} with condition~\eqref{e:asstao2}.

Our results are also relevant in view of the analysis
in \cite{Tao2014} (see Remark 5.2 therein), since they confirm
that the condition \eqref{e:assbmrtrue} is optimal,
when general non--linear terms with the same
scaling are considered.

The proof of the above theorem is based on two crucial
ideas. The first idea is that smoothness of \eqref{e:glanse}
can be reduced to the smoothness of a suitable shell model,
obtained by averaging the energy of a solution
of \eqref{e:glanse} over dyadic shells in Fourier space.
We believe that this reduction may be interesting beyond
the scope of this paper. The second idea is that the
overall contribution of energy and dissipation over
large shells satisfies a recursive inequality.
Under condition \eqref{e:assbmrtrue} dissipation
dumps significantly the flow of energy towards small
scales and ensures smoothness.
This is a more sophisticated
version of the result obtained in \cite{BarMorRom2014},
due to the larger number of interactions between shells.

The paper is organized as follows. In Section~\ref{s:reduction}
we derive the \emph{shell approximation} of a solution
of \eqref{e:glanse}. The recursive formula is obtained
in Section~\ref{s:recursion}. In Section~\ref{s:cascade}
we deduce exponential decays of shell modes by the
recursive formula. The appendix~\ref{s:localexuniq}
contains, for the sake of completeness, a standard existence
and uniqueness result.
%%
%%
%%
%%%%%%%%%%%%%%%%%%%%%%%%%%%%%%%%%%%%%%%%%%%%%%%%%%%%%%%%%%%%%%%%%%%%%%%%%%%%%%%%
\section{From the generalized Fourier Navier--Stokes to the dyadic equation}
  \label{s:reduction}

This section contains one of the crucial steps in our approach.
We show that the proof of Theorem~\ref{t:main2} can be reduced
to a proof of decay of solutions of a suitable shell model.
For simplicity and without loss of generality from now on we
assume that
\[
  m_1(k)
    = \frac{|k|^\alpha}{g(|k|)},
      \qquad
  m_2(k)
    \geq|k|^\beta.
\]
%%
%%%%%%%%%%%%%%%%%%%%%%%%%%%%%%%%%%%%
\subsection{The shell approximation}

The dynamics of our generalized version of Navier-Stokes equation in
Fourier decomposition reads
\begin{equation}\label{e:fourier_ns}
  \begin{cases}
    \displaystyle
    v_k'=-\frac{|k|^\alpha}{g(|k|)}v_k-i\sum_{h\in\zds}\frac{\scalar{v_h,k}}{|h|^\beta}P_k(v_{k-h}),\\
    \scalar{v_k,k}=0,\\
    v_{-k}=\overline{v_k},
  \end{cases}
  \qquad k\in\zds
\end{equation}
where $P_k(w):=w-\frac{\scalar{w,k}}{|k|^2}k$ and $v_0 = 0$.
A solution is a family $(v_k)_{k\in\zds}$ where each
$v_k=v_k(t)$ is a differentiable map from $[0,\infty)$ to $\C^d$
satisfying~\eqref{e:fourier_ns} for all times.

As is common in Littlewood-Paley theory, let $\Phi:[0,\infty)\to[0,1]$
be a smooth function such that $\Phi\equiv1$ on $[0,1]$,
$\Phi\equiv0$ on $[2,\infty)$ and $\Phi$ is strictly decreasing on
$[1,2]$.
For $x\geq0$, let $\psi(x):=\Phi(x)-\Phi(2x)$, so that $\psi$
is a smooth bump function supported on $(\frac12,2)$ satisfying
\[
\sum_{n=0}^\infty\psi(x/2^n)
=1-\Phi(2x)
\equiv1
,\qquad x\geq1.
\]
Notice that it is elementary to show that $\sqrt{\psi}$
is Lipschitz continuous.

Let $\natu$ denote the set of non-negative integers. For all
$n\in\natu$ we introduce the radial maps $\psi_n:\R^d\to[0,1]$ defined
by $\psi_n(x)=\psi(2^{-n}|x|)$. Notice that
\[
\sum_{n\in\natu}\psi_n(x)
\equiv1
,\qquad x\in\zds
\]
In Littlewood-Paley theory one typically defines $\psi_n$ for all
$n\in\Z$, introduces objects like
\[
P_n(x):=\sum_{k\in\Z^d}\psi_n(k)v_ke^{i\scalar{k,x}}
\]
and then proves that $u=\sum_nP_n$. Since these $P_n$ are not
orthogonal\footnote{They are in fact \emph{almost orthogonal} in the
  sense that $\scalar{P_n,P_m}_{L^2}=0$ whenever $|m-n|\geq2$.}  this
does not give a nice decomposition of energy, as
\[
\sum_{n\in\Z}\|P_n\|_{L^2}^2
\neq\sum_{k\in\Z^d}|v_k|^2
=\|u\|_{L^2}^2.
\]

Thus instead of $P_n(x)$ we introduce a sort of \emph{square-averaged}
Littlewood-Paley decomposition. Let
\begin{equation}\label{e:xn_def}
X_n(t)
:=\left(\sum_{k\in\Z^d}\psi_n(k)|v_k(t)|^2\right)^{1/2}
,\qquad n\in\natu,\quad t\geq0
\end{equation}
Then clearly
\[
\sum_{n\in\natu}X_n^2
=\sum_{k\in\Z^d}|v_k|^2
=\|u\|_{L^2}^2.
\]
\begin{remark}
One major difference with respect to the usual Littlewood-Paley theory
is that it is impossible to recover $v$ from these $X_n$ (as it was
with the components $P_n(x)$), since they are averaged both in
the physical space and over one shell of the frequency space.
\end{remark}

We will denote by $H^\gamma$ the Hilbert--Sobolev space of
periodic functions with differentiation index $\gamma$,
namely
\begin{equation}\label{e:sobolev}
  H^\gamma
    = \{v = (v_k)_{k\in\Z^d}: \sum (1+|k|^2)^\gamma|v_k|^2<\infty\}.
\end{equation}
\begin{definition}
If~\eqref{e:xn_def} holds, we say that $X=(X_n(t))_{n\in\natu,t\geq0}$
is the \emph{shell approximation} of $v$.
\end{definition}

If $v\in H^\gamma$ and $X$ is its shell approximation, then
\[
  \sum_n 2^{2\gamma n}X_n^2
    = \sum_k\Bigl(\sum_n 2^{2\gamma n}\psi_n(k)\Bigr)|v_k|^2
    \approx\sum_k |k|^{2\gamma}|v_k|^2
    = \|v\|_{H^\gamma}^2.
\]
Hence, $v(t)\in C^\infty$ if and only if $\sup_n 2^{\gamma n}X_n<\infty$
for every $\gamma>0$. In view of Theorem~\ref{t:localexuniq}
(see page \pageref{t:localexuniq}), Theorem~\ref{t:main2} follows
if we can prove the following result.
\begin{theorem}\label{t:main2bis}
  Under the same assumptions of Theorem~\ref{t:main2}, let
  $v(0)$ be smooth and periodic, and let $m\geq2+\frac{d}2$.
  If $v$ is a solution of \eqref{e:glanse} in $H^m$ on its maximal
  interval of existence $[0,T_\star)$, $X$ is its shell approximation
  and
  \[
    \sup_{[0,T_\star)}\sum 2^{2mn}X_n^2
      <\infty,
  \]
  then $T_\star=\infty$.
\end{theorem}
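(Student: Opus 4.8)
The plan is to read Theorem~\ref{t:main2bis} as a reformulation of the standard continuation (blow-up) criterion and to deduce it from the local well-posedness result of Theorem~\ref{t:localexuniq}. The first step is to translate the hypothesis: by the equivalence $\sum_n 2^{2\gamma n}X_n^2\approx\|v\|_{H^\gamma}^2$ recorded above, the assumption $\sup_{[0,T_\star)}\sum 2^{2mn}X_n^2<\infty$ says precisely that $M:=\sup_{[0,T_\star)}\|v(t)\|_{H^m}$ is finite. So what has to be shown is that a solution whose $H^m$ norm stays bounded on its maximal interval of existence cannot have $T_\star<\infty$.

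Next I would argue by contradiction, supposing $T_\star<\infty$. The input I rely on is that Theorem~\ref{t:localexuniq}, the local existence and uniqueness statement in the appendix, provides from every datum $w_0\in H^m$ (here $m\geq2+\frac{d}2$) a unique solution on an interval of length at least $\tau(\|w_0\|_{H^m})$, with $\tau$ a positive non-increasing function --- the fact that the existence time depends on nothing but the $H^m$-size of the datum being exactly what the energy-estimate proof of that theorem delivers, the one-derivative loss in the nonlinearity of \eqref{e:glanse} being absorbed by Sobolev product and commutator estimates (with the smoothing $u=D_2^{-1}v$, $m_2(k)\geq|k|^\beta$, only helping) and the damping $-D_1v$ carrying a favorable sign. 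Granting this, pick $t_0<T_\star$ with $T_\star-t_0<\tau(M)$, possible since $\tau(M)>0$, and solve \eqref{e:glanse} with datum $v(t_0)$ at time $t_0$: since $\|v(t_0)\|_{H^m}\leq M$, this solution exists on $[t_0,t_0+\tau(M)]$, an interval reaching strictly beyond $T_\star$, and by uniqueness it coincides with $v$ on $[t_0,T_\star)$. Hence $v$ extends past $T_\star$, contradicting the maximality of $T_\star$; therefore $T_\star=\infty$.

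The argument is insensitive to the exact form in which the local theory is packaged: if Theorem~\ref{t:localexuniq} is instead phrased as a Beale--Kato--Majda-type criterion asserting that $T_\star<\infty$ forces $\int_0^{T_\star}N(v(t))\,dt=\infty$ for some quantity $N$ dominated by $\|v\|_{H^m}$, then $\|v(t)\|_{H^m}\leq M$ bounds that integral by $CMT_\star<\infty$, giving the same contradiction; and one may equally well restart at $t=T_\star$ itself, after noting that $\|v(t)\|_{H^m}\leq M$ and the equation make $v$ Lipschitz in $t$ with values in some $H^{m'}$, $m'<m$, so that $v(t)$ converges as $t\to T_\star^-$ to a limit that still lies in $H^m$ by weak-$\ast$ compactness of bounded sets in $H^m$ and lower semicontinuity of the norm. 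The only point requiring care is bookkeeping rather than analysis: one must ensure that Theorem~\ref{t:localexuniq} truly supplies an existence time depending only on the $H^m$-size of the datum, and uniqueness within the class of $H^m$ solutions, so that the restarted solution genuinely glues onto $v$. No fresh estimate on \eqref{e:glanse} is needed at this stage; the substantive part of the program --- showing that the hypothesis $\sup_{[0,T_\star)}\sum 2^{2mn}X_n^2<\infty$ is actually met --- is precisely what the recursion and cascade arguments of Sections~\ref{s:recursion}--\ref{s:cascade} are designed to deliver.
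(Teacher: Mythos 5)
Your proposal is correct and follows essentially the same route as the paper: the hypothesis is converted into $\sup_{[0,T_\star)}\|v(t)\|_{H^m}<\infty$ via the equivalence $\sum_n 2^{2mn}X_n^2\approx\|v\|_{H^m}^2$, and the conclusion then follows from the blow-up alternative of Theorem~\ref{t:localexuniq}, whose proof in the appendix is exactly your restarting argument based on Proposition~\ref{p:localex} (existence time at least $c_\star/\|v_0\|_m$, depending only on the $H^m$ size of the datum, plus uniqueness).
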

%%
%%%%%%%%%%%%%%%%%%%%%%%%%%%%%%%
\subsection{The shell solution}

We want to write a system of equation for the shell approximation
of a solution of \eqref{e:glanse}. We give a more formal connection
between \eqref{e:glanse} and its shell equation because we believe
the notion will result useful beyond the scopes of the present work.

Define the set $I$ as follows,
\begin{equation}\label{e:def_I}
I:=\left\{(l,m,n)\in\natu^3:\begin{array}l
\text{the difference between the two largest}\\
\text{integers among $l$, $m$ and $n$ is at most 2}
\end{array}
\right\}.
\end{equation}
We are now ready to introduce the shell model ODE for the energy of
each shell, equation~\eqref{e:shell_ns}.

\begin{definition}[shell solution]\label{d:shell_solution}
Let $X=(X_n)_{n\in\natu}$ be a sequence of real valued maps,
$X_n:[0,\infty)\to\R$.  We say that $X$ is a \emph{shell solution} if
  there are two families of real valued maps $\chi=(\chi_n)_{n\in\natu}$
  and $\phi=(\phi_{(l,m,n)})_{(l,m,n)\in I}$ such that
\begin{equation}\label{e:shell_ns}
\frac d{dt}X_n^2(t)
=-\chi_n(t)X_n^2(t)+\sum_{\substack{l,m\in\natu\\(l,m,n)\in I}}\phi_{(l,m,n)}(t)X_l(t)X_m(t)X_n(t),
\end{equation}
for all $n\in\natu$ and $t>0$ where the sum above is understood as absolutely
convergent, and $\eta,\phi$ satisfies the following properties,
\begin{enumerate}
\item the family $\phi$ is \emph{antisymmetric}, in the sense that
\[
\phi_{(l,m,n)}(t)=-\phi_{(l,n,m)}(t),
\qquad (l,m,n)\in I,\ t\geq0,
\]
\item there exist two positive constants $c_1$ and $c_2$ for which,
\begin{equation}\label{e:bound_chi_and_phi_def_shell_sol}
\chi_n(t)\geq c_1\frac{2^{\alpha n}}{g(2^{n+1})}
\qquad\text{and}\qquad
\left|\phi_{(l,m,n)}(t)\right|\leq c_22^{(\frac d2+1-\beta)\min\{l,m,n\}}
\end{equation}
for all $(l,m,n)\in I$ and $t\geq0$.
\end{enumerate}
\end{definition}

\begin{remark}
We will prove below that the shell approximation of a solution
of \eqref{e:glanse} is a shell solution. It is easy to check
that the dissipation term is local as expected, due to the
way the shell components of a solution interact in
the model's dynamics. As for the nonlinear term, it turns out
that the set $I$ of the triples of indices $(l,m,n)$ for which there
may be interaction between the shell components $l$, $m$ and $n$ is
quite small. This is basically because in the Fourier space, three
components may interact only if they are the sides of a triangle and
by triangle inequality their lengths cannot be in three shells far
away from each other.
\end{remark}

\begin{remark}
  To ensure that the sum in \eqref{e:shell_ns} is absolutely convergent,
  it is sufficient to assume that the sequence $(X_n(t))_{n\in\natu}$
  is square summable (this will be a consequence of the energy inequality,
  see Definition~\ref{d:energyineq}). Indeed, if $n$ is not the smallest
  index, then the sum is extended to a finite number if
  indices. Otherwise, $\phi_{(l,m,n)}$ is constant
  with respect to $l,m$.
\end{remark}

\begin{remark}\label{r:cancellations_phi}
The antisymmetric property is what makes the non--linearity
of \eqref{e:shell_ns} \emph{formally} conservative. In fact using
antisymmetry, a change of
variable ($m'=n$ and $n'=m$) and the fact that $(l,m',n')\in I$ if
and only if $(l,n',m')\in I$, one could formally write,
\begin{multline*}
-\sum_{\substack{l,m,n\in\natu\\(l,m,n)\in I}}\phi_{(l,m,n)}X_lX_mX_n
=\sum_{\substack{l,m,n\in\natu\\(l,m,n)\in I}}\phi_{(l,n,m)}X_lX_mX_n\\
=\sum_{\substack{l,m',n'\in\natu\\(l,n',m')\in I}}\phi_{(l,m',n')}X_lX_{m'}X_{n'}
=\sum_{\substack{l,m',n'\in\natu\\(l,m',n')\in I}}\phi_{(l,m',n')}X_lX_{m'}X_{n'}
\end{multline*}
If these sums are absolutely convergent, this would prove indeed that the
expression itself is equal to zero.

Since these are infinite sums, these computations are not rigorous
unless we know, for instance, that $\sum_n 2^{2\gamma n}X_n^2<\infty$,
with $\gamma\geq\frac13(\frac{d}2+1-\beta)$, as it can be verified by
an elementary computation.
\end{remark}
%%
%%%%%%%%%%%%%%%%%%%%%%%%%%%%%%%%%%%%%%%%%%%%%%%%%%%%%
\subsection{The shell model as a shell approximation}

The bounds on the coefficients given in
Definition~\ref{d:shell_solution} are in the correct direction to
prove regularity results (and hence Theorem~\ref{t:main2bis}).
The following theorem, which is the main
result of this section shows that they capture the natural scaling of
the shell interactions for the \emph{physical} solutions.

\begin{theorem}\label{t:shell_ode}
If $v$ is a solution of \eqref{e:glanse} on $[0,T]$
and $X$ is its shell approximation, then $X$ is a shell solution.
\end{theorem}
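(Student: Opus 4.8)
The plan is to compute $\frac{d}{dt}X_n^2$ directly from the Fourier system \eqref{e:fourier_ns} and read off the two families $\chi$ and $\phi$, then verify the antisymmetry and the bounds \eqref{e:bound_chi_and_phi_def_shell_sol}. Starting from $X_n^2=\sum_k\psi_n(k)|v_k|^2$ and using $\frac{d}{dt}|v_k|^2=2\re\scalar{v_k',\overline{v_k}}$ (valid since $v\in H^m$ with $m\ge 2+\tfrac d2$, so all sums converge and term-by-term differentiation is legitimate), the linear term of \eqref{e:fourier_ns} contributes $-2\sum_k\psi_n(k)\frac{|k|^\alpha}{g(|k|)}|v_k|^2$. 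Since $\psi_n$ is supported on frequencies with $2^{n-1}\le|k|\le 2^{n+1}$ and $x\mapsto x^{-\alpha}g(x)$ is eventually non-increasing, on that support $\frac{|k|^\alpha}{g(|k|)}\ge c_1\frac{2^{\alpha n}}{g(2^{n+1})}$, so writing $\chi_n(t):=2\,X_n^{-2}\sum_k\psi_n(k)\frac{|k|^\alpha}{g(|k|)}|v_k|^2$ (and $\chi_n:=0$ when $X_n=0$) gives a legitimate dissipation coefficient satisfying the required lower bound; one has to treat the small finitely many shells by adjusting $c_1$, which is harmless.

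For the nonlinear part, the term $-i\sum_h\frac{\scalar{v_h,k}}{|h|^\beta}P_k(v_{k-h})$ contributes, after taking the real part of its pairing with $\overline{v_k}$ and summing against $\psi_n(k)$, a trilinear expression in $v$. The key geometric fact is that the triad $(k,h,k-h)$ closes up as a triangle, so by the triangle inequality the three moduli $|k|$, $|h|$, $|k-h|$ cannot live in three shells that are pairwise far apart: if $|k|\sim 2^n$, $|h|\sim 2^l$, $|k-h|\sim 2^m$, then the two largest of $l,m,n$ differ by at most $2$, i.e.\ $(l,m,n)\in I$ as defined in \eqref{e:def_I}. Grouping the frequency sum by the shell indices of $h$ and $k-h$ then expresses the nonlinear contribution to $\frac{d}{dt}X_n^2$ as $\sum_{(l,m,n)\in I}\Psi_{(l,m,n)}(t)$ where $\Psi_{(l,m,n)}$ collects all triads with $|h|$ in shell $l$ and $|k-h|$ in shell $m$. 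Defining $\phi_{(l,m,n)}:=\Psi_{(l,m,n)}/(X_lX_mX_n)$ whenever the denominator is nonzero (and $0$ otherwise) converts this into the form \eqref{e:shell_ns}; the antisymmetry $\phi_{(l,m,n)}=-\phi_{(l,n,m)}$ is inherited from the structural antisymmetry of the Navier--Stokes nonlinearity under swapping the roles of the two "input" frequencies, exactly the formal cancellation recalled in Remark~\ref{r:cancellations_phi}, applied here at the level of a single fixed $n$ with $l,m$ exchanged.

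The main obstacle — and the technical heart of the proof — is the bound $|\phi_{(l,m,n)}|\le c_2\,2^{(\frac d2+1-\beta)\min\{l,m,n\}}$. The bare trilinear term $\Psi_{(l,m,n)}$ is estimated by $\sum|\scalar{v_h,k}|\,|h|^{-\beta}\,|v_{k-h}|\,|v_k|$ with the sum restricted to the relevant shells; bounding $|\scalar{v_h,k}|\le|v_h|\,|k|$, factoring out $|h|^{-\beta}\sim 2^{-\beta l}$, and using Cauchy--Schwarz in the frequency variables one gets a product of three $\ell^2$-type norms, one of which is essentially $X_l$, another $X_m$ (or $X_n$), and the third carries the surplus of derivatives $|k|$. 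Here one must exploit that the $\min$ of the three indices controls the "low-frequency" leg: the smallest-index factor can be estimated in $\ell^1$ over its shell, which costs a volume factor $2^{d\cdot\min/2}$ (the number of lattice points, to the half power, from Cauchy--Schwarz on that shell), while the derivative $|k|$ is comparable to the largest frequency and, crucially, the $|h|^{-\beta}$ gain of $2^{-\beta\min}$ attaches to whichever leg is smallest after using the triangle constraint to see that the two largest frequencies are comparable. Carefully bookkeeping these three contributions — volume $2^{\frac d2\min}$, derivative loss $2^{\min}$, and dissipative-exponent gain $2^{-\beta\min}$ — yields precisely the exponent $\frac d2+1-\beta$ on $2^{\min\{l,m,n\}}$, uniformly in $t$, provided one keeps track that the remaining two legs are absorbed into $X_lX_mX_n$ (using the almost-orthogonality so that $\sum_k\psi_n(k)|v_k|^2=X_n^2$ rather than an uncontrolled overlap). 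The case analysis according to which of $l,m,n$ is the minimum, combined with the $\le 2$ separation of the top two, is routine but must be done to confirm the single clean bound holds in all configurations; I would organize it as: (i) minimum is the "output" index $n$, (ii) minimum is an "input" index, handling the $|h|^{-\beta}$ placement in each, and check the constant $c_2$ depends only on $d,\beta$ and the Littlewood--Paley profile $\psi$.
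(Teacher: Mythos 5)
Your overall architecture is the same as the paper's: differentiate $X_n^2$ using \eqref{e:fourier_ns}, read off $\chi_n$ and $\phi_{(l,m,n)}$ after inserting the partitions $\sum_l\psi_l(h)=\sum_m\psi_m(k-h)=1$, obtain the restriction to $I$ from the triangle inequality, and get the antisymmetry from the substitution $k\mapsto h-k$ together with $\overline{v_k}=v_{-k}$ and $\scalar{k,v_k}=0$ (Lemma~\ref{l:sign_change_sum_k_of_phi}). One small slip: setting $\chi_n:=0$ when $X_n=0$ violates the lower bound required of $\chi_n(t)$ for \emph{all} $t$ in Definition~\ref{d:shell_solution}; one must set $\chi_n$ equal to the lower-bound value there, as the paper does.

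The genuine gap is in the key estimate \eqref{e:bound_phi}. Your bookkeeping posits a ``derivative loss $2^{\min\{l,m,n\}}$'', but the majorization $|\scalar{v_h,k}|\le|v_h|\,|k|$ places the derivative on $|k|\sim2^n$, which in the dangerous configurations is one of the two \emph{largest} indices; taking absolute values as you propose yields at best a bound of order $2^{(\frac d2-\beta)\min\{l,m,n\}}\cdot2^{\max\{l,m,n\}}$, which exceeds the claimed one by the unbounded factor $2^{\max-\min}$. Indeed, the crude positive majorant you write down simply does not satisfy \eqref{e:bound_phi}; the true $\phi$ does only because of cancellations. Two distinct mechanisms are needed, and you mention neither. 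When the minimum is $m$ (the shell of $k-h$, i.e.\ $n-m>2$), one uses the divergence-free condition $\scalar{v_h,h}=0$ to replace $\scalar{v_h,k}$ by $\scalar{v_h,k-h}$, transferring the derivative onto the small frequency $|k-h|\le2^{m+1}$. When the minimum is $l$ (the shell of $h$, which occurs when $n-m\in\{1,2\}$), no such transfer helps since both $|k|$ and $|k-h|$ are large; there one must exploit a commutator-type cancellation: by Lemma~\ref{l:sign_change_sum_k_of_phi} the inner sum over $k$ can be rewritten with the antisymmetrized weight $\psi_m(k-h)\psi_n(k)-\psi_m(k)\psi_n(k-h)$, and the Lipschitz continuity of $\sqrt\psi$ shows this weight is $O(L|h|2^{-n})$, converting the loss $|k|\sim2^n$ into $|h|\sim2^l=2^{\min}$. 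This second cancellation is the technical heart of the proof and cannot be recovered by ``careful bookkeeping'' of absolute values, so as written your argument does not establish the bound on $\phi$.
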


\begin{remark}\label{r:erredi}
  At this stage it is easy to realize that our main results hold
  also in $\R^d$ with minimal changes. Indeed when passing to
  the shell approximation, all large frequencies are considered
  together in the first element of the shell model.
\end{remark}

The proof of Theorem~\ref{t:shell_ode} can be found at the end of this
section. It is based on
Propositions~\ref{p:bound_chi}-\ref{p:bound_phi} below, which give the
actual definitions of $\chi$ and $\phi$ and prove their properties.

\begin{proposition}\label{p:bound_chi}
Let $X$ be the shell approximation of a solution $v$. Define
$\chi_n(t)$ for all $n\in\natu$ and $t\geq0$ as follows
\begin{equation}\label{e:chi_def}
\chi_n(t)
:=\begin{cases}\displaystyle
\frac2{X_n^2(t)}\sum_{k\in\zds}\psi_n(k)\frac{|k|^\alpha}{g(|k|)}|v_k(t)|^2,
&\quad\text{if }X_n(t)\neq0\\[4ex]
\displaystyle
\frac{2^{\alpha n-\alpha+1}}{g(2^{n+1})},
&\quad\text{if }X_n(t)=0
\end{cases}
\end{equation}
Then
\[
\chi_n(t)
\geq\frac{2^{\alpha n-\alpha+1}}{g(2^{n+1})}
,\qquad n\in\natu,t\geq0
\]
\end{proposition}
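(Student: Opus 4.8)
The plan is to recognize $\chi_n(t)$, in the non-degenerate case, as twice a weighted average of the dissipation symbol $|k|^\alpha/g(|k|)$ over the $n$-th dyadic shell, and then to estimate that average from below by the minimum of the symbol on the shell.

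First I would dispose of the trivial case: if $X_n(t)=0$, then by the very definition \eqref{e:chi_def} we have $\chi_n(t)=2^{\alpha n-\alpha+1}/g(2^{n+1})$, so the asserted inequality holds with equality and there is nothing to prove. Assume then $X_n(t)\neq0$. Using $X_n^2(t)=\sum_{k\in\zds}\psi_n(k)|v_k(t)|^2$ from \eqref{e:xn_def}, the inequality to be proved is, after clearing the positive factor $X_n^2(t)/2$, equivalent to
\[
  \sum_{k\in\zds}\psi_n(k)\frac{|k|^\alpha}{g(|k|)}|v_k(t)|^2
    \geq \frac{2^{\alpha n-\alpha}}{g(2^{n+1})}\sum_{k\in\zds}\psi_n(k)|v_k(t)|^2,
\]
and for this it suffices to check the pointwise bound $|k|^\alpha/g(|k|)\geq 2^{\alpha n-\alpha}/g(2^{n+1})$ for every $k$ in the support of $\psi_n$.

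To get that pointwise bound, recall that $\psi$ is supported in $[\tfrac12,2]$ and $\psi_n(k)=\psi(2^{-n}|k|)$, so $\psi_n(k)\neq0$ forces $2^{n-1}\leq|k|\leq 2^{n+1}$. Since $\alpha\geq0$, the left inequality gives $|k|^\alpha\geq 2^{\alpha(n-1)}=2^{\alpha n-\alpha}$; since $g$ is non-decreasing, the right inequality gives $g(|k|)\leq g(2^{n+1})$, i.e.\ $1/g(|k|)\geq 1/g(2^{n+1})$. Multiplying these two estimates (all quantities being non-negative) yields the pointwise bound, hence the proposition.

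I do not expect a real obstacle here: the only point meriting a moment's attention is the localization of the support of $\psi_n$ inside the annulus $\{2^{n-1}\leq|k|\leq 2^{n+1}\}$, which is precisely what lets the two monotonicities — of $x\mapsto x^\alpha$ and of $g$ — be applied at the appropriate endpoints. Note that the hypothesis that $x^{-\alpha}g(x)$ be eventually non-increasing is not used in this particular estimate; only $\alpha\geq 0$ and the monotonicity of $g$ enter.
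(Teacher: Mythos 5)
Your proposal is correct and follows essentially the same route as the paper: the trivial case $X_n(t)=0$ is handled by definition, and otherwise one bounds the symbol pointwise on the support of $\psi_n$ using $|k|\geq 2^{n-1}$ together with the monotonicity of $g$ and $|k|\leq 2^{n+1}$, then sums against $\psi_n(k)|v_k|^2$ to recover $X_n^2(t)$ via \eqref{e:xn_def}. Your closing observation that the eventual monotonicity of $x^{-\alpha}g(x)$ is not needed here is accurate; that hypothesis only enters later.
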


\begin{proof}
Fix $n\in\natu$ and $t\geq0$. The map $\psi_n$ is supported on
$\{x\in\Z^d:2^{n-1}<|x|<2^{n+1}\}$ and $g$ is non-decreasing, so
\[
\sum_{k\in\zds}\psi_n(k)\frac{|k|^\alpha}{g(|k|)}\left|v_k(t)\right|^2
\geq\sum_{k\in\zds}\psi_n(k)\frac{2^{(n-1)\alpha}}{g(2^{n+1})}\left|v_k(t)\right|^2
=\frac{2^{(n-1)\alpha}}{g(2^{n+1})}X_n^2(t)
\]
where we used~\eqref{e:xn_def}. By~\eqref{e:chi_def} we get the thesis.
\end{proof}
We finally turn our attention to the antisymmetry property and
an upper bound for $\phi_{(l,m,n)}(t)$. The statement is as follows.

\begin{proposition}\label{p:bound_phi}
Let $X$ be the shell approximation of a solution $v$. Define
$\phi_{(l,m,n)}(t)$ for all $l,m,n\in\natu$ and $t\geq0$ as
\begin{multline}\label{e:phi_def}
\phi_{(l,m,n)}(t)
:=\frac2{X_l(t)X_m(t)X_n(t)}\cdot\\
  \cdot\sum_{\substack{h,k\in\Z^d\\h\neq0}}\psi_l(h)\psi_m(k-h)\psi_n(k)\frac{\im\{\scalar{v_h(t),k}\scalar{v_{k-h}(t),v_k(t)}\}}{|h|^\beta},
\end{multline}
(unless $X_l(t)X_m(t)X_n(t)=0$, in which case $\phi_{(l,m,n)}(t):=0$).

Then:
\begin{enumerate}
\item $\phi_{(l,m,n)}(t)=0$ for all $(l,m,n)\notin I$ and all $t\geq0$.
\item $\phi_{(l,m,n)}(t)=-\phi_{(l,n,m)}(t)$ for all $l,m,n\in\natu$ and all $t\geq0$.
\item For any $\beta\geq0$ there exists a constant $c_3>0$
  depending only on $d$, $\beta$ and $\psi$ such that
\begin{equation}\label{e:bound_phi}
|\phi_{(l,m,n)}(t)|
\leq c_3 2^{(\frac d2+1-\beta)\min\{l,m,n\}} 
,\qquad (l,m,n)\in I,t\geq0.
\end{equation}
\end{enumerate}
\end{proposition}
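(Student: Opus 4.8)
The plan is to treat the three assertions in turn, each reducing to a combinatorial or scaling estimate on the Fourier-space convolution sum defining $\phi$. For item (1), I would observe that the summand $\psi_l(h)\psi_m(k-h)\psi_n(k)$ vanishes unless $|h|\in(2^{l-1},2^{l+1})$, $|k-h|\in(2^{m-1},2^{m+1})$, and $|k|\in(2^{n-1},2^{n+1})$. Since $k = h + (k-h)$, the triangle inequality forces the three dyadic magnitudes to be the sides of a (possibly degenerate) triangle; an elementary case analysis then shows that the largest two of $l,m,n$ cannot differ by more than $2$, i.e.\ $(l,m,n)\in I$. This is exactly the geometric remark flagged after Definition~\ref{d:shell_solution}.

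For item (2), the antisymmetry, I would swap the roles of $m$ and $n$ in \eqref{e:phi_def}, which corresponds to the change of summation variables $k\mapsto h$, $h\mapsto k$ (relabelling so that $\psi_l$ always attaches to the first slot). Under this swap $\psi_l(h)\psi_n(k-h)\psi_m(k)$ becomes $\psi_l(k)\psi_n(h-k)\psi_m(h)$; using $v_{-j}=\overline{v_j}$ and $\Div$ (i.e.\ $\scalar{v_j,j}=0$), together with the divergence-free cancellation that lets one replace $v_{k-h}$ by its projection, the imaginary part $\im\{\scalar{v_h,k}\scalar{v_{k-h},v_k}\}$ picks up exactly a sign. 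This is the discrete analogue of the standard energy cancellation for the nonlinearity, so the computation is routine once the bookkeeping of which index sits in which shell is set up carefully; the key algebraic identity is that $\scalar{v_h,k}\scalar{v_{k-h},v_k} + \scalar{v_k,h}\scalar{v_{k-h},v_h}$ is real, which follows from $\scalar{v_h,h}=\scalar{v_k,k}=0$ and $h = k - (k-h)$.

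For item (3), the bound, I would first use Cauchy--Schwarz (or rather a trilinear Hölder) to estimate, for $X_lX_mX_n\neq0$,
\[
|\phi_{(l,m,n)}(t)|
\leq \frac{2}{X_l X_m X_n}\sum_{h\neq0}\frac{|\scalar{v_h,k}|\,|v_{k-h}|\,|v_k|}{|h|^\beta}\psi_l(h)\psi_m(k-h)\psi_n(k),
\]
and then bound $|\scalar{v_h,k}|\leq |v_h|\,|k|\leq 2^{n+1}|v_h|$ and $|h|^{-\beta}\leq 2^{-(l-1)\beta}$ on the support. By the triangle geometry from item (1) one of the three indices $l,m,n$ is comparable to the largest; the worst case for the prefactor $|k|$ vs.\ the dissipative gain $|h|^{-\beta}$ is handled by symmetry of the sum (after using antisymmetry we may assume an ordering), and one reduces to a sum of the form $\sum_{h}|v_h|\,|v_{k-h}|\,|v_k|$ over a bounded number of dyadic shells. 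Applying Cauchy--Schwarz in $h$ and then in $k$ converts this into $X_lX_mX_n$ times a count of the number of lattice points in the relevant shell, which is $O(2^{d\cdot(\text{smallest index})})$ after the appropriate pairing; collecting the powers of $2$ yields the exponent $(\frac d2+1-\beta)\min\{l,m,n\}$. The main obstacle is the last step: getting the sharp exponent requires pairing the three indices optimally — using the smallest shell for the lattice-point count and distributing the $|k|$ and $|h|^{-\beta}$ factors so that the two larger indices cancel against the $X$'s in the denominator — rather than naively bounding everything by the largest scale, which would give a much weaker estimate. Once the correct pairing is identified, the counting is elementary; the constant $c_3$ then depends only on $d$, $\beta$, and the implied constants in $\psi$.
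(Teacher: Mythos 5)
Your item (1) matches the paper's argument (triangle inequality on the supports of $\psi_l$, $\psi_m$, $\psi_n$), but the other two items contain genuine gaps. For item (2), the substitution you propose, $h\leftrightarrow k$, exchanges the slot of $\psi_l$ (attached to $h$) with that of $\psi_n$ (attached to $k$), so it would relate $\phi_{(l,m,n)}$ to something like $\phi_{(n,m,l)}$ rather than to $\phi_{(l,n,m)}$, and it also turns the weight $|h|^{-\beta}$ into $|k|^{-\beta}$, which cannot be undone. The change of variables that works keeps $h$ fixed and reflects $k\mapsto h-k$: by radiality $\psi_m(k-h)$ and $\psi_n(k)$ trade places, $\scalar{v_h,k}$ picks up a sign because $\scalar{v_h,h}=0$, and $\scalar{v_{k-h},v_k}$ is preserved using $v_{-j}=\overline{v_j}$. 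The algebraic identity you invoke (reality of $\scalar{v_h,k}\scalar{v_{k-h},v_k}+\scalar{v_k,h}\scalar{v_{k-h},v_h}$) is not the cancellation relevant to antisymmetry in the last two indices.

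For item (3) the real difficulty is the case $\min\{l,m,n\}=l$ with $|m-n|\le 2$, i.e.\ the convecting frequency $h$ lies in the smallest shell while the other two frequencies are comparable and large. There your strategy of ``distributing the $|k|$ and $|h|^{-\beta}$ factors'' cannot succeed: the only derivative-type factor available is $\scalar{v_h,k}$, and both $|k|$ and $|k-h|$ are of size $2^n\gg 2^l$, so any pointwise estimate yields at best a factor $2^{n}\,2^{-\beta l}\,2^{dl/2}$, which exceeds the target $2^{(\frac d2+1-\beta)l}$ by the unbounded factor $2^{n-l}$. The correct argument uses a cancellation, not a pointwise bound: antisymmetrizing the inner sum over $k$ via Lemma~\ref{l:sign_change_sum_k_of_phi} replaces $\psi_m(k-h)\psi_n(k)$ by $\tfrac12\bigl(\psi_m(k-h)\psi_n(k)-\psi_m(k)\psi_n(k-h)\bigr)$, and the Lipschitz continuity of $\sqrt{\psi}$ shows this difference is $O(|h|2^{-n})$, which converts the bad factor $2^{n+1}$ into the good factor $|h|\approx 2^{l}$. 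This commutator-type gain is the key idea of the proof and is absent from your proposal. In the complementary case $\min\{l,m,n\}=m$ (with $n-m>2$) your plan does work essentially as in the paper, using $\scalar{v_h,k}=\scalar{v_h,k-h}$ to place the derivative on the small frequency and counting lattice points in the smallest shell.
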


For the proof we need a couple of lemmas.
\begin{lemma}\label{l:sign_change_sum_k_of_phi}
Suppose $v=(v_k)_{k\in\Z^d}$ is a complex field over $\Z^d$ such that
for all $k\in\Z^d$, $\scalar{k,v_k}=0$ and
$\overline{v_k}=v_{-k}$. Then for all $h\in\Z^d$,
\begin{multline*}
\sum_{k\in\Z^d}\psi_m(k-h)\psi_n(k)\im\{\scalar{v_h,k}\scalar{v_{k-h},v_k}\}\\
=-\sum_{k\in\Z^d}\psi_m(k)\psi_n(k-h)\im\{\scalar{v_h,k}\scalar{v_{k-h},v_k}\}.
\end{multline*}
\end{lemma}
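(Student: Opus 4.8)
The identity is purely algebraic, so the plan is to perform a change of summation variable together with the symmetry relations on $v$. First I would substitute $k \mapsto h-k$ in the left-hand sum: this transforms $\psi_m(k-h)\psi_n(k)$ into $\psi_m(-k)\psi_n(h-k) = \psi_m(k)\psi_n(h-k)$, using that $\psi_m$ is radial, which already produces the correct bump-function factors on the right. The remaining task is to check that the factor $\im\{\scalar{v_h,k}\scalar{v_{k-h},v_k}\}$ picks up exactly one sign under the same substitution $k\mapsto h-k$.

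For that bracket I would track the three pieces. After $k\mapsto h-k$ the vector $k$ becomes $h-k$, and $k-h$ becomes $-k$, so the argument becomes $\im\{\scalar{v_h,h-k}\scalar{v_{-k},v_{h-k}}\}$. Now I invoke $\overline{v_k}=v_{-k}$, so $v_{-k}=\overline{v_k}$ and $v_{h-k}=\overline{v_{k-h}}$, giving $\scalar{v_{-k},v_{h-k}} = \scalar{\overline{v_k},\overline{v_{k-h}}} = \overline{\scalar{v_k,v_{k-h}}} = \scalar{v_{k-h},v_k}$ (the Hermitian inner product conjugated and the two entries are real-linear/antilinear in the right way). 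For the first factor, $\scalar{v_h,h-k} = \scalar{v_h,h} - \scalar{v_h,k} = -\scalar{v_h,k}$ since $\scalar{v_h,h}=0$ by the constraint $\scalar{k,v_k}=0$ applied at $k=h$. Collecting the factors, the bracket becomes $\im\{-\scalar{v_h,k}\scalar{v_{k-h},v_k}\} = -\im\{\scalar{v_h,k}\scalar{v_{k-h},v_k}\}$, which is exactly the sign asserted.

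Finally I would note that the rearrangement is legitimate because the sum is effectively finite — the factor $\psi_m(k-h)\psi_n(k)$ (respectively $\psi_m(k)\psi_n(k-h)$) restricts $k$ to a bounded region of $\Z^d$ — so there are no convergence issues and the change of variable is just a relabeling of finitely many terms. The main (and only real) subtlety is the bookkeeping in the second paragraph: one must be careful about which inner-product slot is conjugate-linear and make sure the orthogonality relation is used at the correct index ($h$, not $k$), but once the substitution $k\mapsto h-k$ is fixed everything follows mechanically.
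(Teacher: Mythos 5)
Your proof is correct and follows essentially the same route as the paper: the substitution $k\mapsto h-k$, the radial symmetry of $\psi_m,\psi_n$, the orthogonality $\scalar{v_h,h}=0$, and the reality condition $\overline{v_k}=v_{-k}$ giving $\scalar{v_{-k},v_{h-k}}=\scalar{v_{k-h},v_k}$ are exactly the steps in the paper's argument. The closing remark on the sum being effectively finite is a harmless extra justification.
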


\begin{proof}
Consider the left--hand side. By performing the change of variable $k'=h-k$ we
obtain
\begin{gather*}
\psi_m(k-h)=\psi_m(-k')=\psi_m(k'),\\
\psi_n(k)=\psi_n(h-k')=\psi_n(k'-h),\\
\scalar{v_h,k}=\scalar{v_h,h-k'}=-\scalar{v_h,k'},\\
\scalar{v_{k-h},v_k}=\scalar{v_{-k'},v_{h-k'}}=\scalar{\overline{v_{k'}},\overline{v_{k'-h}}}=\scalar{v_{k'-h},v_{k'}}.
\end{gather*}
The sum for $k\in\Z^d$ is equivalent to the sum for $k'\in\Z^d$ and
this concludes the proof.
\end{proof}

\begin{lemma}\label{l:cs_plus_cardinality_inequality}
Let $v$ be a solution and $X$ its shell approximation.
Then for all $a,b,c\in\natu$ and for all $t\geq0$,
\begin{multline*}
\sum_{h\in\Z^d}\psi_a(h)|v_h(t)|\sum_{k\in\Z^d}\sqrt{\psi_b(k)\psi_c(k-h)}|v_k(t)||v_{k-h}(t)|\leq\\
\leq2^{\frac d2(a+3)}X_a(t)X_b(t)X_c(t).
\end{multline*}
\end{lemma}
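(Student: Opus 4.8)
The plan is to apply the Cauchy--Schwarz inequality twice — first in the variable $k$ with $h$ held fixed, then in the variable $h$ — and finally to bound a lattice‑point count. Throughout, $t\geq0$ is fixed and I write $v_k$ for $v_k(t)$; since all terms are non‑negative, every rearrangement below is legitimate and no case distinction (for vanishing $X_a,X_b,X_c$) is needed.

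First, fix $h\in\Z^d$ and estimate the inner sum. By Cauchy--Schwarz,
\[
\sum_{k\in\Z^d}\sqrt{\psi_b(k)\psi_c(k-h)}\,|v_k||v_{k-h}|
\leq\Bigl(\sum_{k\in\Z^d}\psi_b(k)|v_k|^2\Bigr)^{1/2}\Bigl(\sum_{k\in\Z^d}\psi_c(k-h)|v_{k-h}|^2\Bigr)^{1/2}.
\]
The first factor equals $X_b$ by \eqref{e:xn_def}; in the second, the change of variable $k'=k-h$ gives $\sum_k\psi_c(k-h)|v_{k-h}|^2=\sum_{k'}\psi_c(k')|v_{k'}|^2=X_c^2$. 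Hence the inner sum is at most $X_bX_c$, uniformly in $h$. Substituting this and applying Cauchy--Schwarz in $h$ (using $0\le\psi_a\le1$ to split $\psi_a=\sqrt{\psi_a}\cdot\sqrt{\psi_a}$),
\[
\sum_{h\in\Z^d}\psi_a(h)|v_h|\cdot X_bX_c
\leq X_bX_c\Bigl(\sum_{h\in\Z^d}\psi_a(h)\Bigr)^{1/2}\Bigl(\sum_{h\in\Z^d}\psi_a(h)|v_h|^2\Bigr)^{1/2}
= X_aX_bX_c\Bigl(\sum_{h\in\Z^d}\psi_a(h)\Bigr)^{1/2}.
\]
Since $\psi_a$ is supported on $\{x:|x|<2^{a+1}\}$ and bounded by $1$, the sum $\sum_h\psi_a(h)$ is at most the number of lattice points with $|h|_\infty<2^{a+1}$, which is $(2\cdot2^{a+1}+1)^d=(2^{a+2}+1)^d\leq 2^{d(a+3)}$ for $a\ge0$. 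Taking the square root produces the factor $2^{\frac d2(a+3)}$ and the lemma follows.

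There is no substantial obstacle here: the only points requiring care are performing the two Cauchy--Schwarz steps in the correct order — the inner one with $h$ frozen, so that the translation invariance $\sum_k\psi_c(k-h)|v_{k-h}|^2=X_c^2$ can be invoked — and the bookkeeping in the lattice‑point count, which must be done so as to land exactly on the stated constant $2^{\frac d2(a+3)}$ rather than a slightly weaker one.
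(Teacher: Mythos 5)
Your proof is correct and follows essentially the same route as the paper: Cauchy--Schwarz in $k$ (with $h$ fixed) to bound the inner sum by $X_bX_c$, then Cauchy--Schwarz in $h$ together with a cardinality bound on the lattice points in the support of $\psi_a$, landing on the same constant $2^{\frac d2(a+3)}$. The only (cosmetic) difference is that you split $\psi_a=\sqrt{\psi_a}\cdot\sqrt{\psi_a}$ where the paper splits off the indicator of the support and then uses $\psi_a^2\leq\psi_a$.
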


\begin{proof}
By Cauchy-Schwarz inequality and formula~\eqref{e:xn_def} we have
that for all $h\in\Z^d$,
\[
\sum_{k\in\Z^d}\sqrt{\psi_b(k)\psi_c(k-h)}|v_k(t)||v_{k-h}(t)|
\leq X_b(t)X_c(t).
\]
Then, let $S_a$ denote the intersection between $\Z^d$ and the support
of $\psi_a$. By inscribing $S_a$ in a cube we can bound its
cardinality with $|S_a|\leq(2^{a+2}+1)^d\leq2^{(a+3)d}$, so
\[
\sum_{k\in\Z^d}\psi_a(k)|v_k(t)|
\leq\left(|S_a|\sum_{k\in S_a}\psi_a^2(k)v_k^2(t)\right)^{1/2}
\leq\left(2^{(a+3)d}\right)^{1/2}X_a(t),
\]
where we used the fact that $\psi_a(k)\leq1$.
\end{proof}

\begin{proof}[Proof of Proposition~\ref{p:bound_phi}]
Consider the definition of $\phi_{(l,m,n)}$,
equation~\eqref{e:phi_def}.  By applying
Lemma~\ref{l:sign_change_sum_k_of_phi}, for fixed $t$, we immediately
conclude that
\[
\phi_{(l,n,m)}
=-\phi_{(l,m,n)}
\qquad l,m,n\in\natu,
\]
and in particular that $\phi_{(l,m,m)}=0$.

Moreover, for all choices of $h$ and $k$, the arguments of $\psi_l$,
$\psi_m$ and $\psi_n$ are the sides of a triangle in $\R^d$, so by the
triangle inequality the size of the largest (wlog $k$) is at most
twice the size of the second largest (wlog $h$). On the other hand for
all $j\in\natu$ the support of $\psi_j$ is
$\{x\in\R^d:2^{j-1}<|x|<2^{j+1}\}$. Thus whenever
$\psi_l(h)\psi_n(k)\neq0$, necessarily $n\leq l+2$ since
\[
2^{n-1}<|k|\leq2|h|<2^{l+2}.
\]
This proves that $\phi_{(l,m,n)}=0$ outside $I$ as defined in
equation~\eqref{e:def_I}.

Finally we prove inequality~\eqref{e:bound_phi} for $(l,m,n)\in I$ with $m<n$. We will
consider separately the two cases $n-m>2$ and $n-m\in\{1,2\}$,
starting from the former.

\bigskip\noindent\textbf{Case 1.} Since $m<n-2$ and $(l,m,n)\in I$,
then $m=\min\{l,m,n\}$ and $|l-n|\leq2$. This means in particular that
for all the non-zero terms of the sum in equation~\eqref{e:phi_def},
tipically $|k-h|<|k|$, so it is convenient to substitute
$\scalar{v_h,k}=\scalar{v_h,k-h}$ in the equation to obtain the
following bound
\[
|\phi_{(l,m,n)}|
\leq\frac2{X_lX_mX_n}\sum_{\substack{h,k\in\Z^d\\h\neq0}}\psi_l(h)\psi_m(k-h)\psi_n(k)\frac{|v_h||k-h||v_{k-h}||v_k|}{|h|^\beta}.
\]
By the definition of $\psi_l$, either $\psi_l(h)=0$ or $|h|\geq
2^{l-1}\geq2^m$. Applying this and the change of variable $k'=k-h$ one
gets,
\[
|\phi_{(l,m,n)}|
\leq\frac{2^{1-\beta m}}{X_lX_mX_n}\sum_{k'\in\Z^d}\psi_m(k')|k'||v_{k'}|\sum_{h\in\Z^d}\psi_l(h)\psi_n(k'+h)|v_h||v_{k'+h}|.
\]
In the same way we can substitute $|k'|\leq2^{m+1}$ and apply
Lemma~\ref{l:cs_plus_cardinality_inequality} (recall that $\psi\leq1$,
so $\psi\leq\sqrt\psi$) to get
\[
|\phi_{(l,m,n)}|
\leq2^{1-\beta m+m+1+\frac d2(m+3)}.
\]
Since in the present case $\min\{l,m,n\}=m$, this proves
inequality~\eqref{e:bound_phi} with $c_3=2^{2+3d/2}$.

\bigskip\noindent\textbf{Case 2.} Suppose now that $n-m\in\{1,2\}$ and
$(l,m,n)\in I$, then $l\leq n+2$ and $\min\{l,m,n\}\geq l-4$. In this
case it is $l$ that can be small with respect to $m$ and $n$, so we
take the terms in $l$ and $h$ outside the internal sum,
\[
|\phi_{(l,m,n)}|
\leq
\frac{2}{X_lX_mX_n}\sum_{h\in\Z^d\setminus\{0\}}\frac{\psi_l(h)}{|h|^\beta}\left|\sum_{k\in\Z^d}\psi_m(k-h)\psi_n(k)\im\{\scalar{v_h,k}\scalar{v_{k-h},v_{k}}\}\right|.
\]
The idea is to exploit the cancellations in the sum over $k$ that
happen when $k-h$ and $k$ are switched. By
Lemma~\ref{l:sign_change_sum_k_of_phi} and the bound $|k|\leq2^{n+1}$
for $k$ in the support of $\psi_m$ or $\psi_n$,
\begin{multline*}
|\phi_{(l,m,n)}|
\leq\frac2{X_lX_mX_n}\sum_{h\in\Z^d\setminus\{0\}}\frac{\psi_l(h)}{|h|^\beta}\\
\cdot\frac12\left|\sum_{k\in\Z^d}(\psi_m(k-h)\psi_n(k)-\psi_m(k)\psi_n(k-h))\im\{\scalar{v_h,k}\scalar{v_{k-h},v_{k}}\}\right|\\
\leq\frac{2^{n+1}}{X_lX_mX_n}\sum_{h\in\Z^d\setminus\{0\}}\frac{\psi_l(h)|v_h|}{|h|^\beta}\sum_{k\in\Z^d}\bigl|\psi_m(k-h)\psi_n(k)-\psi_m(k)\psi_n(k-h)\bigr| \left|v_{k-h}\right| \left|v_{k}\right|.
\end{multline*}
We turn our attention to the term
$\psi_m(k-h)\psi_n(k)-\psi_m(k)\psi_n(k-h)$ and show that it is
small. Let $L$ denote the Lipschitz constant of the function
$\psi^{1/2}$. Then for all $h,k\in\Z^d$ and all $m,n\in\N$ such that
$m\geq n-2$,
\begin{multline*}
\left|\sqrt{\psi_m(k-h)\psi_n(k)}-\sqrt{\psi_m(k)\psi_n(k-h)}\right|\\
=\left|\sqrt{\psi_m(k-h)\psi_n(k)}-\sqrt{\psi_m(k)\psi_n(k)}+\sqrt{\psi_m(k)\psi_n(k)}-\sqrt{\psi_m(k)\psi_n(k-h)}\right|\\
\leq L\frac{|h|}{2^m}\sqrt{\psi_n(k)}+L\frac{|h|}{2^n}\sqrt{\psi_m(k)}
\leq L\frac{|h|}{2^{n-3}}.
\end{multline*}
Moreover by simmetry with respect to $m$ and $n$,
\begin{multline*}
\sum_{k\in\Z^d}\left(\sqrt{\psi_m(k-h)\psi_n(k)}+\sqrt{\psi_m(k)\psi_n(k-h)}\right)\left|v_{k-h}\right| \left|v_{k}\right|\\
=2\sum_{k\in\Z^d}\sqrt{\psi_m(k-h)\psi_n(k)}\left|v_{k-h}\right| \left|v_{k}\right|,
\end{multline*}
so that
\[
|\phi_{(l,m,n)}|
\leq\frac{2^5L}{X_lX_mX_n}\sum_{h\in\Z^d\setminus\{0\}}|h|^{1-\beta}\psi_l(h)|v_h|\sum_{k\in\Z^d}\sqrt{\psi_m(k-h)\psi_n(k)}\left|v_{k-h}\right| \left|v_{k}\right|.
\]
By the usual bound $2^{l-1}\leq|h|\leq2^{l+1}$, since $\beta\geq0$, we
see that $|h|^{1-\beta}\leq2^{l(1-\beta)+1+\beta}$, so by
Lemma~\ref{l:cs_plus_cardinality_inequality},
\[
|\phi_{(l,m,n)}|
\leq 2^5 2^{(1-\beta)l+1+\beta} 2^{(l+3)\frac d2}L
\leq 2^{(\frac d2+1-\beta)(l-4)+9-3\beta+\frac{11}2d}L.
\]
Since in the present case $\min\{l,m,n\}\geq l-4$, this proves
inequality~\eqref{e:bound_phi} with $c_3=2^{9+\frac{11}2d-3\beta}L$.
\end{proof}

Finally we have all the ingredients to prove the main theorem of this section.

\begin{proof}[Proof of Theorem~\ref{t:shell_ode}]
A direct computation using~\eqref{e:xn_def}
and~\eqref{e:fourier_ns} shows that
\begin{multline*}
\frac12\frac d{dt}X_n^2
=\re\sum_{k\in\Z^d}\psi_n(k)\scalar{v'_k,v_k}\\
=-\sum_{k\in\zds}\psi_n(k)\frac{|k|^\alpha}{g(|k|)}|v_k|^2+\im\sum_{k\in\Z^d}\sum_{h\in\zds}\psi_n(k)\frac{\scalar{v_h,k}}{|h|^\beta}\scalar{P_k(v_{k-h}),v_k}\\
=-\sum_{k\in\zds}\psi_n(k)\frac{|k|^\alpha}{g(|k|)}|v_k|^2+\sum_{\substack{h,k\in\Z^d\\h\neq0}}\psi_n(k)\frac{\im\{\scalar{v_h,k}\scalar{v_{k-h},v_k}\}}{|h|^\beta}.
\end{multline*}
To deal with the first sum, define $\chi$ as in
Proposition~\ref{p:bound_chi}. By applying~\eqref{e:chi_def} for
$X_n(t)\neq0$ and~\eqref{e:xn_def} for $X_n(t)=0$ we see that in both
cases,
\[
2\sum_{k\in\zds}\psi_n(k)\frac{|k|^\alpha}{g(|k|)}|v_k|^2
=\chi_n(t)X_n^2(t).
\]
Now consider the second sum. Since the terms with $h=k$ give no
contribution, we can apply
\[
\sum_{l\in\natu}\psi_l(h)
=\sum_{m\in\natu}\psi_m(k-h)
=1
,\qquad h,k\in\Z^d,\quad 0\neq h\neq k,
\]
to get
\begin{multline*}
\sum_{\substack{h,k\in\Z^d\\h\neq0}}\psi_n(k)\frac{\im\{\scalar{v_h,k}\scalar{v_{k-h},v_k}\}}{|h|^\beta}\\
=\sum_{\substack{h,k\in\Z^d\\h\neq0}}\sum_{l,m\in\natu}\psi_l(h)\psi_m(k-h)\psi_n(k)\frac{\im\{\scalar{v_h,k}\scalar{v_{k-h},v_k}\}}{|h|^\beta}\\
=\sum_{l,m\in\natu}\sum_{\substack{h,k\in\Z^d\\h\neq0}}\psi_l(h)\psi_m(k-h)\psi_n(k)\frac{\im\{\scalar{v_h,k}\scalar{v_{k-h},v_k}\}}{|h|^\beta},
\end{multline*}
where it was possible to exchange the order of summation because the
middle expression is clearly absolutely convergent.

Now define $\phi$ as in Proposition~\ref{p:bound_phi}. By
applying~\eqref{e:phi_def} or~\eqref{e:xn_def} depending on
$X_l(t)X_m(t)X_n(t)$ being positive or zero, we see that for all
$l,m,n\in\natu$ and $t\geq0$,
\begin{multline*}
2\sum_{\substack{h,k\in\Z^d\\h\neq0}}\psi_l(h)\psi_m(k-h)\psi_n(k)\frac{\im\{\scalar{v_h,k}\scalar{v_{k-h},v_k}\}}{|h|^\beta}=\\
=\phi_{(l,m,n)}(t)X_l(t)X_m(t)X_n(t).
\end{multline*}
Putting all together we get
\[
\frac d{dt}X_n^2(t)
=-\chi_n(t)X_n^2(t)+\sum_{l,m\in\natu}\phi_{(l,m,n)}(t)X_l(t)X_m(t)X_n(t)
\qquad n\in\natu,\ t\geq0.
\]
Finally recalling by Proposition~\ref{p:bound_phi} that $\phi\equiv0$
outside $I$, we may restrict the scope of the sum and obtain
equation~\eqref{e:shell_ns}. The required properties of the
coefficients $\chi$ and $\psi$ follow again from
Propositions~\ref{p:bound_chi}-\ref{p:bound_phi}.
\end{proof}
%%
%%
%%
%%%%%%%%%%%%%%%%%%%%%%%%%%%%%%%%%%%%%%%%%%%%%%%%%%%%%%%%%%%%%%%%%%%%%%%%%%%%%%%%
\section{From the dyadic equation to the recursive inequality}
  \label{s:recursion}

In view of the results of the previous section, we can now concentrate
on shell solutions and forget equation \eqref{e:glanse}. In this section
we proceed as in \cite{BarMorRom2014} and we deduce a recursive inequality
between the tails of energy and dissipation. Clearly here, due to the
more complex non--linear interaction, the relation is less trivial
than in \cite{BarMorRom2014}.

\begin{definition}\label{d:energyineq}
A shell solution $X$ satisfies the \emph{energy inequality} on $[0,T]$
if the sum $\sum_n X_n^2(0)$ is finite and 
\begin{equation}\label{e:energy_inequality}
\sum_{n\in\natu}X_n^2(t)+\int_0^t\sum_{n\in\natu}\chi_n(s)X_n^2(s)ds
\leq\sum_{n\in\natu}X_n^2(0),
\qquad t\in[0,T].
\end{equation}
\end{definition}
\begin{definition}\label{d:def_df}
Let $X$ be a shell solution
and define the sequences of real valued maps $(F_n)_{n\in\natu}$ and
$(d_n)_{n\in\natu}$ for $t\geq0$ by
\begin{gather*}
F_n(t)
:=\sum_{k\geq n}X_k^2(t),\\\label{e:def_dn}
d_n(t)
:=\left(F_n(t)+\sum_{h\geq n}\int_0^t\chi_h(s)X_h^2(s)ds\right)^{\frac12}.
\end{gather*}
We will call $(F_n)_{n\in\natu}$ the \emph{tail} of $X$ and
$(d_n)_{n\in\natu}$ the \emph{energy bound} of $X$.
\end{definition}

The recursive inequality between the tails and the energy bound is
given in the next result.
\begin{proposition}\label{p:d_recursion}
  Let $X$ be a shell solution that satisfies the energy inequality on
  a time interval $[0,t]$, let $(d_n)_{n\in\natu}$ be its
  sequence of energy bounds, and set $\lambda=2^\alpha$.
  
  Then there is a positive constant $c_4>0$ such that for all $n\in\natu$,
\begin{equation}\label{e:d_recursion}
d_n^2(t)
\leq F_n(0)+c_4\sum_{l=0}^{n-1}\frac{\bar d_l}{\lambda^{n-l}}\sum_{m\geq n-2}\frac{g(2^{m+1})}{\lambda^{m-n}}\bigl(d_{m}^2(t)-d_{m+1}^2(t)\bigr),
\end{equation}
where $\bar d_l:=\max_{s\in[0,t]}d_l(s)$.
\end{proposition}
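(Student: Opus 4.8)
The plan is to start from the shell ODE~\eqref{e:shell_ns}, integrate it in time over $[0,t]$, and sum over all shells $h\geq n$ to obtain an identity for $d_n^2(t)$. Integrating~\eqref{e:shell_ns} for a fixed $h$ and using the definitions of $F_n$ and $d_n$ gives
\[
  d_n^2(t)
    = F_n(0) + \sum_{h\geq n}\int_0^t \sum_{\substack{l,m\in\natu\\(l,m,h)\in I}}\phi_{(l,m,h)}(s)\,X_l(s)X_m(s)X_h(s)\,ds,
\]
so the whole task reduces to bounding the triple sum on the right. The first key step is to exploit the antisymmetry of $\phi$ (Remark~\ref{r:cancellations_phi}) to cancel all interactions among triples $(l,m,h)$ that lie entirely in the range $h\geq n$: when $l,m,h$ are all $\geq n$ the corresponding terms sum to zero (provided the summability needed to justify the rearrangement holds, which it does under the standing assumption $\sup\sum 2^{2mn}X_n^2<\infty$). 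Hence only triples with at least one index below $n$ survive. Since $(l,m,h)\in I$ forces the two largest of $l,m,h$ to differ by at most $2$, if one index is $<n$ and another is $\geq n$, then the smallest index $\ell:=\min\{l,m,h\}$ must be $<n$; and because the largest two are within $2$ of each other, the indices $m$ and $h$ are comparable and both at least $n-2$ roughly. After using antisymmetry in the pair $(m,h)$ to pair up terms, one is left with a sum of the schematic form $\sum_{l<n}\sum_{m\geq n-2} (\text{something})\,X_l\cdot(\text{quadratic in } X_m,X_{m\pm1,2})$.

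The second key step is the bookkeeping of the two quantitative bounds from Definition~\ref{d:shell_solution}: $|\phi_{(l,m,h)}|\le c_2 2^{(\frac d2+1-\beta)\ell}$ with $\ell=\min$, and $\chi_h\ge c_1 2^{\alpha h}/g(2^{h+1})$. The factor $X_l$ gets estimated by $\bar d_l=\max_{s}d_l(s)$, which dominates $X_l(s)$ for all $s\le t$ since $X_l^2\le F_l\le d_l^2$. The product $X_m X_h$ with $|m-h|\le 2$ is handled by $X_mX_h\le\tfrac12(X_m^2+X_h^2)$ and then absorbed into an integral $\int_0^t \chi_h X_h^2\,ds$ by inserting and dividing by the lower bound on $\chi_h$; this is exactly the mechanism that produces the factor $g(2^{m+1})/\lambda^m$ (with $\lambda=2^\alpha$) from $1/\chi_h$, and the telescoping quantity $d_m^2(t)-d_{m+1}^2(t)=X_m^2(t)+\int_0^t\chi_m X_m^2\,ds\ge 0$ on the right. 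Combining the $2^{(\frac d2+1-\beta)\ell}$ from $|\phi|$ with the requirement $\alpha+\beta\ge\frac{d+2}2$, i.e. $\frac d2+1-\beta\le\alpha$, one checks that the surviving power of $2$ in $\ell=l$ is at most $\alpha l=\log_2\lambda^{l}$, which is what lets the geometric factors $\lambda^{-(n-l)}$ (for $l<n$) and $\lambda^{-(m-n)}$ (for $m\ge n-2$) come out with the right signs and make the double sum convergent; the constant $c_4$ collects $c_1,c_2$, the finitely many index shifts coming from $|m-h|\le 2$, and powers of $2$ from the edge cases.

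I would organize the write-up as: (i) integrate and apply antisymmetry to discard the all-large triples; (ii) classify the remaining triples by which index is the minimum and by the gap between the two largest, reducing to $O(1)$ families indexed by $l<n$ and $m\ge n-2$; (iii) plug in the bounds on $\phi$ and $\chi$, use $X_l\le\bar d_l$ and $X_mX_h\le\tfrac12(X_m^2+X_h^2)$, and divide-and-multiply by $\chi_h$ to convert the time integral of the quadratic term into $\sum_{m}\tfrac{g(2^{m+1})}{\lambda^{m}}(d_m^2(t)-d_{m+1}^2(t))$ up to harmless shifts; (iv) collect the geometric weights using $\frac d2+1-\beta\le\alpha$ to land on~\eqref{e:d_recursion}. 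The main obstacle I anticipate is step (i)–(ii): making the cancellation rigorous (the rearrangement of a conditionally-given infinite sum must be justified by the a priori $H^m$ bound, exactly as flagged in Remark~\ref{r:cancellations_phi}) and then carefully tracking, through the constraint set $I$, that every surviving term really does have its \emph{minimum} index below $n$ and its other two indices within $2$ of a common value $\ge n-2$, so that the $l$-sum runs only up to $n-1$ and the $m$-sum only down to $n-2$ as stated. The quantitative estimates in step (iii)–(iv) are then routine geometric-series bookkeeping, with the scaling condition $\alpha+\beta\ge\frac{d+2}2$ doing the essential work of keeping the weights summable.
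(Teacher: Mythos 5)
Your quantitative bookkeeping in steps (iii)--(iv) — the bound $|\phi_{(l,m,h)}|\le c_2\lambda^{\min}$ via $\tfrac d2+1-\beta\le\alpha$, the estimate $X_l\le\bar d_l$, the Cauchy--Schwarz step $X_mX_h\le\tfrac12(X_m^2+X_h^2)$, and the conversion of $X_m^2$ into $\tfrac{g(2^{m+1})}{\lambda^m}\chi_mX_m^2$ followed by $\int_0^t\chi_mX_m^2\,ds\le d_m^2(t)-d_{m+1}^2(t)$ — is exactly what the paper does. The gap is in your step (i). You sum the ODE \eqref{e:shell_ns} over the infinite tail $h\ge n$, integrate, and assert an \emph{identity} $d_n^2(t)=F_n(0)+\int_0^t\sum(\dots)$, then cancel the triples with $m,h\ge n$ by antisymmetry. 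Both moves require rearranging/interchanging infinite sums in which $\min\{l,m,h\}$ is unbounded, so $|\phi_{(l,m,h)}|$ can grow like $2^{(\frac d2+1-\beta)\min}$; as Remark~\ref{r:cancellations_phi} itself points out, this needs a decay hypothesis of the type $\sum_n2^{2\gamma n}X_n^2<\infty$ with $\gamma\ge\frac13(\frac d2+1-\beta)$. You justify it by appealing to ``the standing assumption $\sup\sum 2^{2mn}X_n^2<\infty$'', but that is \emph{not} a hypothesis of Proposition~\ref{p:d_recursion}: the proposition assumes only that $X$ is a shell solution satisfying the energy inequality (hence merely $\sum_nX_n^2(t)<\infty$), and the decay of $X_n$ is precisely what the ensuing cascade argument is meant to prove. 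Your derivation is therefore circular at this point. Moreover, claiming an identity for the tail implicitly upgrades the energy \emph{inequality} \eqref{e:energy_inequality} to an equality; the inequality is postulated as a hypothesis exactly because termwise tail summation is not assumed to be valid.

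The paper's proof repairs this by never touching the infinite tail of the nonlinearity directly: it differentiates the finite head $\sum_{h=0}^{n-1}X_h^2$, where $\min\{l,m,h\}\le n-1$ gives a \emph{uniform} bound on $\phi$ so that square summability alone yields absolute convergence; Lemma~\ref{l:cancellations_phi} then performs the antisymmetry cancellation on that head, leaving only the cross terms with $m<n\le h$; and the energy inequality \eqref{e:energy_inequality} is used (as an inequality) to convert the resulting head identity into the tail bound $d_n^2(t)\le F_n(0)+\int_0^t\sum_{m<n\le h}\phi_{(l,m,h)}X_lX_mX_h\,ds$. A second, smaller imprecision: after the cancellation what survives is exactly the set $\{(l,m,h)\in I:\,m<n\le h\}$, not ``all triples with at least one index below $n$'' (triples with $l<n$ but $m,h\ge n$ also cancel); this matters because it is what guarantees that, after the symmetrization in $(l,m)$, the small index runs only over $l\le n-1$ and the comparable pair satisfies $|m-h|\le2$ with $h\ge n$, so the inner sum starts at $m\ge n-2$ as in \eqref{e:d_recursion}. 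With step (i) replaced by the head-plus-energy-inequality argument, the rest of your outline goes through.
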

\begin{proof}
Fix $n\in\natu$. Differentiate $\sum_{h=0}^{n-1}X_h^2$ using
equation~\eqref{e:shell_ns},
\[
\frac d{dt}\sum_{h=0}^{n-1}X_h^2
=-\sum_{h=0}^{n-1}\chi_hX_h^2+\sum_{\substack{l,m,h\in\natu\\(l,m,h)\in I\\h\leq n-1}}\phi_{(l,m,h)}X_lX_mX_h.
\]
Apply Lemma~\ref{l:cancellations_phi} below to the second sum and integrate
on $[0,t]$ to obtain
\[
\sum_{h=0}^{n-1}X_h^2(t)-\sum_{h=0}^{n-1}X_h^2(0)
=-\int_0^t\sum_{h=0}^{n-1}\chi_hX_h^2\,ds-\int_0^t\sum_{\substack{(l,m,h)\in I\\m<n\leq h}}\phi_{(l,m,h)}X_lX_mX_h\,ds,
\]
so that by the energy inequality~\eqref{e:energy_inequality},
\[
F_n(t)+\int_0^t\sum_{h\geq n}\chi_h(s)X_h^2(s)\,ds
\leq F_n(0)+\int_0^t\sum_{\substack{(l,m,h)\in I\\m<n\leq h}}\phi_{(l,m,h)}X_l(s)X_m(s)X_h(s)\,ds,
\]
where the $F_n$ are the tails of $X$ and $F_n(0)<\infty$ by
hypothesis. Thus by \eqref{e:def_dn},
\[
d_n^2(t)
\leq F_n(0)+\int_0^t\sum_{\substack{(l,m,h)\in I\\m<n\leq h}}\phi_{(l,m,h)}X_l(s)X_m(s)X_h(s)\,ds.
\]
Recall that $\alpha+\beta\geq\frac{d}2+1$, hence
the bound \eqref{e:bound_chi_and_phi_def_shell_sol}
for $\phi$ yields $\phi_{(l,m,h)}
\leq c_2 \lambda^{\min\{l,m,h\}}$. Therefore
\[
d_n^2(t)
\leq F_n(0)+\int_0^t\sum_{\substack{(l,m,h)\in I\\m<n\leq h}}c_2\lambda^{\min\{l,m\}}|X_l(s)X_m(s)X_h(s)|\,ds.
\]
It is convenient to split the
set over which the sum is done into $\{l<m\}$ and $\{m\leq l\}$,
\begin{multline*}
\sum_{\substack{(l,m,h)\in I\\m<n\leq h}}\lambda^{\min\{l,m\}}|X_lX_mX_h|
\leq\sum_{\substack{(l,m,h)\in I\\l<m<n\leq h}}\lambda^l|X_lX_mX_h|
+\sum_{\substack{(l,m,h)\in I\\m<n\leq h\\m\leq l}}\lambda^m|X_lX_mX_h|\\
\leq\sum_{\substack{(l,m,h)\in I\\l<m<n\leq h}}\lambda^l|X_lX_mX_h|
+\sum_{\substack{(l,m,h)\in I\\l<n\leq h\\l\leq m}}\lambda^l|X_lX_mX_h|\\
\leq 2\sum_{\substack{(l,m,h)\in I\\l<n\leq h\\l\leq m}}\lambda^l|X_lX_mX_h|
\leq 2\sum_{l=0}^{n-1}\lambda^l\bar d_l\sum_{h\geq n}\sum_{m=h-2}^{h+2}|X_mX_h|.
\end{multline*}
Apply the Cauchy-Schwarz inequality to get
\[
2\sum_{h\geq n}\sum_{m=h-2}^{h+2}|X_hX_m|
\leq\sum_{h\geq n}\sum_{m=h-2}^{h+2}(X_h^2+X_m^2)
\leq10\sum_{m\geq n-2}X_m^2.
\]
Then by the bound on $\chi$ in~\eqref{e:bound_chi_and_phi_def_shell_sol},
on all $[0,t]$,
\[
\sum_{m\geq n-2}X_m^2
\leq c_1^{-1}\sum_{m\geq n-2}\frac{g(2^{m+1})}{\lambda^m}\chi_mX_m^2.
\]
Finally the integral of $\chi_mX_m^2$ can be bounded using \eqref{e:def_dn},
\[
d_{m}^2(t)-d_{m+1}^2(t)
=F_{m}(t)-F_{m+1}(t)+\int_0^t\chi_m(s)X_m^2(s)\,ds
\geq\int_0^t\chi_m(s)X_m^2(s)\,ds.
\]
Putting all together we obtain 
\[
d_n^2(t)
\leq F_n(0)+10\frac{c_2}{c_1}\sum_{l=0}^{n-1}\frac{\bar d_l}{\lambda^{-l}}\sum_{m\geq n-2}\frac{g(2^{m+1})}{\lambda^{m}}(d_{m}^2(t)-d_{m+1}^2(t)),
\]
thus proving equation~\eqref{e:d_recursion} with $c_4=10\frac{c_2}{c_1}$.
\end{proof}

\begin{lemma}\label{l:cancellations_phi}
Let $X$ be a shell solution, then for all $n\in\natu\setminus\{0\}$
and $s\in[0,t]$,
\begin{equation}\label{e:cancellations_phi}
\sum_{\substack{(l,m,h)\in I\\h\leq n-1}}\phi_{(l,m,h)}X_lX_mX_h
=-\sum_{\substack{(l,m,h)\in I\\m\leq n-1<h}}\phi_{(l,m,h)}X_lX_mX_h.
\end{equation}
\end{lemma}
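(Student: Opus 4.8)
The plan is to exploit the antisymmetry of $\phi$ together with the combinatorial structure of the index set $I$, exactly as in the formal cancellation computation of Remark~\ref{r:cancellations_phi}, but now localized to the truncated sum over $h\leq n-1$. Start from the left-hand side $S:=\sum_{(l,m,h)\in I,\ h\leq n-1}\phi_{(l,m,h)}X_lX_mX_h$. Split the index set according to whether $m\leq n-1$ or $m\geq n$. On the part where both $m\leq n-1$ and $h\leq n-1$, I would perform the change of variables $m\leftrightarrow h$ (permissible since $(l,m,h)\in I$ iff $(l,h,m)\in I$, the condition being symmetric in the last two entries), use $\phi_{(l,h,m)}=-\phi_{(l,m,h)}$ from Proposition~\ref{p:bound_phi}(2), and conclude that this ``symmetric'' block equals its own negative, hence vanishes. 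This isolates the contribution where $m\geq n$ and $h\leq n-1$, i.e. $S=\sum_{(l,m,h)\in I,\ h\leq n-1<m}\phi_{(l,m,h)}X_lX_mX_h$. Relabeling $m\leftrightarrow h$ in this remaining sum and again using antisymmetry produces exactly $-\sum_{(l,m,h)\in I,\ m\leq n-1<h}\phi_{(l,m,h)}X_lX_mX_h$, which is the right-hand side of \eqref{e:cancellations_phi}.

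The one genuinely delicate point is the rearrangement of these infinite sums: the swap $m\leftrightarrow h$ and the splitting are only legitimate if the relevant series converge absolutely. I would handle this by invoking the bound \eqref{e:bound_chi_and_phi_def_shell_sol} on $\phi$ together with the remark following Definition~\ref{d:shell_solution}: for a shell solution satisfying the energy inequality the sequence $(X_n(s))_n$ is square summable, and for fixed $h\leq n-1$ the triple $(l,m,h)\in I$ forces $|l-m|\leq 2$ when $h$ is the smallest index, or confines $l,m$ to a bounded window around $h$ otherwise; in either case $\sum_{(l,m,h)\in I}|\phi_{(l,m,h)}|\,|X_l X_m X_h|$ is dominated, via Cauchy--Schwarz in the $(l,m)$ variables and the uniform bound $|\phi_{(l,m,h)}|\leq c_2\,2^{(\frac d2+1-\beta)\min\{l,m,h\}}$, by a constant times $X_h\cdot(\sum_k X_k^2)$ times a geometric factor (here one uses $\alpha+\beta\geq\frac d2+1$, so that $\min\{l,m,h\}$ never appears with a positive exponent after the reduction). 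Summing the finitely many values $h\leq n-1$ then gives absolute convergence of the full double sum $S$, which justifies every manipulation above.

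Concretely I would organize the proof as: (i) record absolute convergence of $S$ as just described; (ii) write $S = S_{<} + S_{\geq}$ according to $m\leq n-1$ versus $m\geq n$; (iii) show $S_{<}=0$ by the $m\leftrightarrow h$ involution and antisymmetry (the change of variables is an honest bijection of the finite-or-bounded index set involved, once $h$ is held in $\{0,\dots,n-1\}$); (iv) transform $S_{\geq}$ by the same involution into $-\sum_{m\leq n-1<h}\phi_{(l,m,h)}X_lX_mX_h$. The main obstacle is really bookkeeping rather than ideas: one must be careful that after the relabeling the constraint ``$h\leq n-1$'' becomes ``$m\leq n-1$'' while ``$m\geq n$'' becomes ``$h\geq n$'', so that the surviving sum is precisely the one with $m\leq n-1<h$ as stated, and that no boundary terms with $m=h$ are double-counted — but those vanish anyway since $\phi_{(l,m,m)}=0$.
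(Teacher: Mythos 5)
Your proof is correct and uses essentially the same mechanism as the paper: absolute convergence (which is already built into Definition~\ref{d:shell_solution}, so the finitely many values of $h\leq n-1$ immediately justify the rearrangements), followed by the involution swapping the last two indices combined with $\phi_{(l,m,h)}=-\phi_{(l,h,m)}$. The only cosmetic difference is bookkeeping: you split according to $m\leq n-1$ versus $m\geq n$ and annihilate the symmetric block outright, whereas the paper splits according to $m<h$ versus $m>h$ and lets the overlapping pieces cancel after the relabeling — both yield \eqref{e:cancellations_phi}.
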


\begin{proof}
By using \eqref{e:bound_chi_and_phi_def_shell_sol},
noticing that $\min(l,m,h)\leq n-1$, we see that
by definition of shell solution (Definition~\ref{d:shell_solution})
the left--hand side of \eqref{e:cancellations_phi} is an
absolutely convergent sum. Therefore we can exploit the
cancellations due to the antisymmetry of $\phi$, as in
Remark~\ref{r:cancellations_phi}. Indeed
\begin{equation}\label{e:long1}
  \smashoperator[r]{\sum_{\substack{(l,m,h)\in I\\h\leq n-1}}}\phi_{(l,m,h)}X_lX_mX_h
    = \smashoperator[r]{\sum_{\substack{(l,m,h)\in I\\m<h\leq n-1}}}\phi_{(l,m,h)}X_lX_mX_h
      +\smashoperator[r]{\sum_{\substack{(l,m,h)\in I\\h\leq n-1\\m>h}}}\phi_{(l,m,h)}X_lX_mX_h,
\end{equation}
and
\begin{multline}\label{e:long2}
  \sum_{\substack{(l,m,h)\in I\\h\leq n-1\\m>h}}\phi_{(l,m,h)}X_lX_mX_h
    = -\sum_{\substack{(l,m,h)\in I\\h\leq n-1\\m>h}}\phi_{(l,h,m)}X_lX_mX_h =\\
    = -\sum_{\substack{(l,h',m')\in I\\m'\leq n-1\\h'>m'}}\phi_{(l,m',h')}X_lX_{m'}X_{h'}
    = -\sum_{\substack{(l,m',h')\in I\\m'\leq n-1\\m'<h'}}\phi_{(l,m',h')}X_lX_{m'}X_{h'}.
\end{multline}
By using \eqref{e:long2} into \eqref{e:long1} the conclusion follows.
\end{proof}
%%
%%
%%
%%%%%%%%%%%%%%%%%%%%%%%%%%%%%%%%%%%%%%%%%%%%%%%%%%%%%%%%%%%%%%%%%%%%%%%%%%%%%%%%
\section{Solving the recursion}
  \label{s:cascade}

In this section we complete the proof of our main result. In the previous
section we have shown a recursive inequality involving the energy
bounds of a shell solution. The following theorem shows that
shell solutions are smooth. By Theorem~\ref{t:shell_ode}
the shell approximation of a solution of \eqref{e:glanse}
is a shell solution, hence Theorem~\ref{t:main2bis}
holds, and in turns Theorem~\ref{t:main2} holds as well.
\begin{theorem}\label{t:main2ter}
  Let $X$ be a shell solution
  satisfying the energy inequality on $[0,t)$.
  If $\sup_n 2^{mn}|X_n(0)|<\infty$ for every
  $m\geq1$, then
  \[
    \sup_{s\in [0,t]}\sup_n 2^{mn}|X_n(s)|
      <\infty
  \]
  for every $m\geq1$.
\end{theorem}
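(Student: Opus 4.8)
The plan is to iterate the recursive inequality from Proposition~\ref{p:d_recursion} in order to extract exponential decay of the energy bounds $d_n(t)$, and then upgrade this to the claimed bound on the $X_n$. The starting point is the observation that, under the hypothesis on $X(0)$, the full energy $\sum_n X_n^2(0)$ is finite, so the energy inequality gives $d_n(t)\le d_0(t)\le \|X(0)\|_{\ell^2}$ for all $n$; thus $\bar d_l$ is uniformly bounded. The key quantity to track is the decay rate of $\bar d_n:=\sup_{[0,t]}d_n$ as $n\to\infty$. I would set up a bootstrap: first show $d_n\to0$, then show $d_n$ decays faster than any polynomial, and finally that $2^{mn}d_n\to0$ for every $m$. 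Since $X_n^2\le F_n=d_n^2-(\text{dissipation tail})\le d_n^2$... more precisely $X_n^2\le F_n\le d_n^2$, control of $2^{mn}d_n$ immediately gives control of $2^{mn}|X_n|$, so the theorem reduces to proving super-polynomial decay of $\bar d_n$.

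\textbf{Main argument.} Write $a_m:=d_m^2(t)-d_{m+1}^2(t)\ge0$, a summable sequence with $\sum_{m\ge n}a_m\le d_n^2(t)$. The recursion reads
\[
d_n^2(t)\le F_n(0)+c_4\sum_{l=0}^{n-1}\lambda^{l-n}\bar d_l\sum_{m\ge n-2}\lambda^{n-m}g(2^{m+1})a_m .
\]
The inner geometric factor $\lambda^{l-n}$ summed over $l<n$ is bounded by a constant (since $\lambda=2^\alpha\ge1$; when $\alpha=0$ one must be slightly more careful, but then the correction $g$ plays no role and the model is already subcritical-like — in any case $\sum_{l<n}\lambda^{l-n}\le C$ when $\lambda>1$, and the $\lambda=1$ case is handled by the assumption $\alpha+\beta\ge\frac d2+1$ with a direct variant). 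So, using $\bar d_l\le\bar d_0$,
\[
d_n^2(t)\le F_n(0)+C\,\bar d_0\sum_{m\ge n-2}\lambda^{n-m}g(2^{m+1})a_m .
\]
The crucial structural point is condition \eqref{e:assbmrtrue}: $\int_1^\infty\frac{ds}{sg(s)}=\infty$ means $\sum_n \frac1{g(2^n)}=\infty$. Combined with the fact that $x^{-\alpha}g(x)$ is eventually non-increasing (so $g$ grows no faster than $x^\alpha$, i.e. $g(2^{m+1})\lesssim\lambda^m$, making the weight $\lambda^{n-m}g(2^{m+1})$ essentially bounded by $\lambda^n$... ), one sets up a Gronwall-type / telescoping argument on the partial sums $S_n:=\sum_{m\ge n}\frac1{g(2^{m+1})}d_m^2$ or on $d_n^2$ directly. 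The mechanism, as in \cite{BarMorRom2014}, is that the recursion forces $\sum_m \frac{d_m^2}{g(2^{m+1})}<\infty$ to combine with $\sum_m\frac1{g(2^{m+1})}=\infty$ to yield $\liminf d_m^2=0$; then a monotonicity/continuity argument in $n$ promotes $\liminf$ to $\lim$, giving $d_n\to0$. Once $d_n\to0$, feed this improved decay back into the recursion: the tail sum $\sum_{m\ge n-2}\lambda^{n-m}g(2^{m+1})a_m$ is now small, which forces $d_n^2$ to be small at a rate that can be iterated. A careful bookkeeping of the geometric weights shows each pass through the recursion gains a fixed factor of decay, and repeating gives, for every $\mu>0$, a bound $\bar d_n\le C_\mu 2^{-\mu n}$; the time-uniformity comes for free because every estimate above is in terms of $\bar d_l=\max_{[0,t]}d_l$.

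\textbf{The main obstacle.} The delicate point is the first bootstrap step — converting the recursive inequality, which only controls $d_n^2(t)$ by a \emph{weighted tail of the differences} $a_m=d_m^2-d_{m+1}^2$, into genuine decay of $d_n$. The difficulty is the interplay between the growth of $g$ (through the weight $g(2^{m+1})$) and the divergence condition \eqref{e:assbmrtrue}: one has to show that $g$ cannot grow fast enough to defeat the dissipation, and this is precisely where the optimal condition $\int ds/(sg(s))=\infty$ (rather than the stronger $\int ds/(sg(s)^2)$ or $ds/(sg(s)^4)$ of earlier work) must be used with no slack. I expect this to require a clever choice of auxiliary summation weights — something like weighting shell $m$ by $\bigl(\sum_{j\le m}g(2^{j+1})^{-1}\bigr)^{-1}$ or similar — so that the recursion becomes a genuine supermartingale-type inequality whose divergent normalization forces $d_n\to0$. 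After that first decay is established, the super-polynomial bootstrap is comparatively routine: it only uses that $g(2^m)\lesssim\lambda^m$ and that each iteration of the recursion contracts by a fixed geometric factor, and the extraction of $\sup_n 2^{mn}|X_n|<\infty$ from $\bar d_n\le C_m 2^{-mn}$ via $X_n^2\le F_n\le d_n^2$ is immediate.
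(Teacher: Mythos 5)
Your overall strategy is the paper's strategy: use the recursion of Proposition~\ref{p:d_recursion}, exploit $\sum_n g(2^{n+1})^{-1}=\infty$ to get $\bar d_n\downarrow0$, bootstrap to $\bar d_n\le C_\mu\lambda^{-\mu n}$, and conclude via $X_n^2\le F_n\le d_n^2$. But the two steps you defer are exactly where the proof lives, and one simplification you make would break the argument. Bounding $\sum_{l<n}\lambda^{l-n}\bar d_l$ by $C\bar d_0$ discards the quantity $Q_n=\sum_{l<n}\bar d_l\lambda^{l-n}$, whose own decay is indispensable: since $g(2^{m+1})$ may grow as fast as $\lambda^m$, the weight $\lambda^{n-m}g(2^{m+1})$ in the tail sum is of order $\lambda^n$, so with only $\sum_{m\ge n-2}a_m\le d_{n-2}^2$ the recursion reads $d_n^2\lesssim F_n(0)+\bar d_0\,\lambda^n d_{n-2}^2$, which is vacuous. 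The paper must run a \emph{joint} induction on $Q_n$ and $\bar d_n$, using the already-established decay of $\bar d_l$ for $l<n$ to make $Q_n\lesssim\lambda^{-n}$ and thereby cancel the $\lambda^n$ coming from $g$.

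The second gap is the claim that "each pass through the recursion gains a fixed factor, and repeating gives super-polynomial decay." This is not routine, and the divergence of $\sum b_n$ alone does not deliver it: the natural output of \eqref{e:Rformula} is decay of $\bar d_{n_k}$ along a subsequence $(n_k)$ defined by accumulating $b$-mass (in the paper, $\sum_{j=n_k-1}^{n}b_j\ge\theta\lambda^{-k/4}$, with a geometrically shrinking threshold so that the loss $\lambda^{k/4}/\theta$ in \eqref{e:bdrecursive} is absorbed by $Q_{n_k}\le\lambda^{-k/2}$). Since $(n_k)$ may be very sparse, $\bar d_{n_k}^2\le\lambda^{-Mk}$ says nothing about decay in $n$. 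The paper closes this with Lemma~\ref{l:indices}: infinitely many $k$ satisfy $n_{k+1}=n_k+1$, hence $b_{n_k-1}\ge\theta\lambda^{-k/4}$ at such $k$, and from such a $k_0$ a second, shell-by-shell induction (the "cascade recursion") propagates $\bar d_{n_{k_0}+m}^2\le c\lambda^{-Mm}$ using $b_{n_{k_0}-1+m}\ge\theta\lambda^{-k_0/4-m}$ and the smallness of the constant $c=\lambda^{-M(k_0-1)}$ obtained by taking $k_0$ large. Your sketch contains neither the block construction with its tuned thresholds nor the consecutive-indices lemma, and these are the ideas that make the optimal condition \eqref{e:assbmrtrue} suffice; as written, the proposal identifies the right obstacle but does not overcome it.
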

Let $b_n = g(2^{n+1})^{-1}$, $n\geq0$, then the assumptions of
Theorem~\ref{t:main2} for $g$ read in terms of the sequence $b$ as
\begin{itemize}
  \item $(b_n)_{n\in\N}$ non--increasing,
  \item $(\lambda^n b_n)_{n\in\N}$ non--decreasing,
  \item $\sum_n b_n = \infty$.
\end{itemize}
Let $X$ be a shell solution as in the statement of Theorem~\ref{t:main2ter},
denote by $(d_n)_{n\in\N}$ and $(F_n)_{n\in\N}$ the energy bound
and the tail of $X$ (see Definition~\ref{d:def_df}), and set
$\bar d_n = \sup_{[0,t]}d_n(t)$ for every $n$. Set
\[
  Q_n
    = \sum_{j=0}^{n-1}\frac{\bar d_j}{\lambda^{n-j}}
\]
and
\[
  R_n(t)
    = \sum_{j\geq n} \frac{d_j(t)^2 - d_{j+1}(t)^2}{\lambda^{j-n}b_j},
\]
where $\lambda=2^\alpha$ as in the previous section.
We recall that, by Proposition~\ref{p:d_recursion}, the
following inequality holds,
\begin{equation}\label{e:drecursive}
  d_n(t)^2
    \leq F_n(0) + c_4Q_n R_{n-2}(t).
\end{equation}
In the following lemma we collect some properties of the quantities
$R_n$, $Q_n$, $\bar d_n$ that will be crucial in the proof of
Theorem~\ref{t:main2ter} above.
\begin{lemma}
  The following properties hold.
  \begin{enumerate}
    \item For every $1\leq m_1\leq m_2$ and $t>0$,
      \begin{equation}\label{e:Rformula}
        \min\{R_{m_1}(t),R_{m_1+1}(t)\dots,R_{m_2}(t)\}
          \leq \frac{\lambda}{\lambda-1}
            \frac{d_{m_1}(t)^2}{\sum_{n=m_1}^{m_2} b_n}.
      \end{equation}
    \item For every $t>0$, $\liminf_n R_n(t) = 0$.
    \item $\bar d_n\downarrow0$ as $n\to\infty$.
    \item $Q_n\to0$ as $n\to\infty$.
    \item $(Q_n)_{n\geq1}$ is eventually non--increasing.
  \end{enumerate}
\end{lemma}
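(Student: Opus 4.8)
The plan is to establish the five properties essentially in the order listed, exploiting the recursive inequality~\eqref{e:drecursive} together with the structural assumptions on $b$, and feeding earlier parts into later ones.

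First I would prove~\eqref{e:Rformula}. The quantity $R_n(t)$ is a weighted sum of the non--negative increments $d_j(t)^2-d_{j+1}(t)^2$; since $\lambda\geq1$ and the weights $\lambda^{-(j-n)}$ are at most $1$ for $j\geq n$, a telescoping estimate gives $b_{m_1}R_{m_1}(t)+\dots$ hmm — more precisely, summing the defining series and using that $(d_j^2)_j$ is non--increasing (because each increment is non--negative) one gets, for each $n$ in the range, a lower bound on $\sum_{j\geq n}(d_j^2-d_{j+1}^2)$ in terms of $R_n$ and $b$; taking the minimum over $n\in\{m_1,\dots,m_2\}$ and combining the telescoping sums across the whole block $[m_1,m_2]$ yields the bound with the harmonic--type denominator $\sum_{n=m_1}^{m_2}b_n$. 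The key algebraic point is that $\lambda^{j-n}b_j\geq b_j$ is the wrong direction, so one must instead compare $R_n$ with $R_{n+1}$ and use $\lambda^{-1}$ geometric decay to absorb the shift; this is where the factor $\lambda/(\lambda-1)$ comes from.

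Property~(2) is then an immediate consequence of~(1): let $m_2\to\infty$ with $m_1$ fixed; since $\sum_n b_n=\infty$ the denominator diverges while $d_{m_1}(t)^2$ is a fixed finite number (finiteness of $d_{m_1}(t)$ follows from the energy inequality, Definition~\ref{d:energyineq}, which bounds $F_{m_1}(t)$ and the dissipation tail by $\sum_n X_n^2(0)<\infty$), so the minimum over longer and longer blocks tends to $0$, forcing $\liminf_n R_n(t)=0$. Property~(3): $(\bar d_n)_n$ is non--increasing because $d_n(t)^2=F_n(t)+\sum_{h\geq n}\int_0^t\chi_h X_h^2$ is a tail sum in $n$, hence monotone in $n$ for each fixed $t$, and the sup over $t$ preserves monotonicity; it converges to $0$ since $\bar d_n^2\leq\sup_t d_0(t)^2<\infty$ forces the tails to vanish (the series $\sum_n X_n^2(0)$ converges and dominates everything uniformly in $t$ by the energy inequality). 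Property~(4) follows from~(3): $Q_n=\sum_{j=0}^{n-1}\lambda^{j-n}\bar d_j$ is a geometric--weighted average of a null sequence $(\bar d_j)$, so by a standard Toeplitz/Cesàro--type argument $Q_n\to0$ (split the sum at $j<N$ and $j\geq N$: the head is $O(\lambda^{-(n-N)})\to0$, the tail is $\leq \bar d_N\sum_{j\geq 0}\lambda^{-j}$ which is small for $N$ large). Property~(5): from $Q_{n+1}=\lambda^{-1}Q_n+\lambda^{-1}\bar d_n$ (reading off the definition) one gets $Q_{n+1}\leq Q_n$ exactly when $\bar d_n\leq(\lambda-1)Q_n$; since $\bar d_n\to0$ by~(3) while $Q_n$ need not decay faster, but in fact both tend to $0$, one argues instead that $Q_{n+1}-Q_n=\lambda^{-1}(\bar d_n-(\lambda-1)Q_n)$ and that $(\lambda-1)Q_n\geq(\lambda-1)\lambda^{-1}\bar d_{n-1}\geq(\lambda-1)\lambda^{-1}\bar d_n$; if $\lambda\geq2$ this already gives $(\lambda-1)Q_n\geq\tfrac12\bar d_n$ but not quite $\geq\bar d_n$, so one iterates: $Q_n\geq\sum_{i=1}^{k}\lambda^{-i}\bar d_{n-i}\geq\bar d_n\sum_{i=1}^k\lambda^{-i}$ using monotonicity of $\bar d$, and for $k$ large (hence $n$ large) $\sum_{i=1}^k\lambda^{-i}\geq1/(\lambda-1)$, giving $(\lambda-1)Q_n\geq\bar d_n$ eventually, i.e. $Q_{n+1}\leq Q_n$ eventually.

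The main obstacle I expect is~\eqref{e:Rformula}: getting the constant $\lambda/(\lambda-1)$ and the \emph{minimum} (rather than, say, an average) on the left requires carefully choosing, among the indices $m_1,\dots,m_2$, the one where $R_n$ is smallest and then bounding the full telescoping mass $d_{m_1}^2-d_{m_2+1}^2\leq d_{m_1}^2$ from below by $(\min_n R_n)\sum_{n=m_1}^{m_2}\lambda^{\,n-n}b_n$ after discarding the favorable geometric weights — the delicate part is that the weight $\lambda^{-(j-n)}$ in $R_n$ depends on $n$, so one cannot naively sum the $R_n$; instead one fixes the minimizing $n^\star$, writes $d_{n^\star}^2\geq\sum_{j\geq n^\star}\lambda^{-(j-n^\star)}b_j\cdot\big(\text{increment ratios}\big)$ and then uses that over the block the partial sums of $b$ accumulate. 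This bookkeeping, while elementary, is where the proof has to be done with care; everything else is soft monotonicity and Cesàro--type arguments.
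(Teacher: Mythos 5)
Your parts (2) and (4) are fine, but there are three genuine problems. The most serious is part (3): from the energy inequality you only get, for each fixed $s$, that $d_n(s)^2$ is the tail of a convergent series, hence $d_n(s)\to0$ \emph{pointwise} in $s$; this does not give $\bar d_n=\sup_{s\in[0,t]}d_n(s)\to0$, because nothing soft prevents the energy from concentrating at higher and higher frequencies as $s$ varies --- uniform smallness of $\sup_s F_n(s)$ is essentially the regularity statement being proved, not a consequence of \eqref{e:energy_inequality}. The actual proof is not soft: it combines the recursive inequality \eqref{e:drecursive} with \eqref{e:Rformula} on blocks $[m_k,m_{k+1})$ chosen independently of $s$ (possible since $\sum_n b_n=\infty$), and uses the monotonicity of $n\mapsto d_n(s)$ to convert ``for each $s$ some $R_{n(s)}(s)$ in the block is small'' into a bound on $d$ at the right end of the block that is uniform in $s$. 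Since your part (4) is deduced from (3), this gap propagates.

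Second, part (5): the inequality $\sum_{i=1}^k\lambda^{-i}\geq 1/(\lambda-1)$ is false --- the partial sums equal $(1-\lambda^{-k})/(\lambda-1)$ and stay strictly below the limit --- so your chain only yields $(\lambda-1)Q_n\geq(1-\lambda^{-n})\bar d_n$, hence $Q_{n+1}-Q_n\leq\lambda^{-n-1}\bar d_n$, which may still be positive. The paper argues differently: from $Q_{n+1}=\lambda^{-1}Q_n+\lambda^{-1}\bar d_n$ and $\bar d_n\leq\bar d_{n-1}$ one gets $Q_{n+1}-Q_n\leq\lambda^{-1}(Q_n-Q_{n-1})$, so the increments stay non--positive once one of them is, and some increment must be non--positive since otherwise $Q_n$ would be non--decreasing, contradicting $Q_n\to0$ with $Q_n\geq0$. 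Finally, part (1): your concrete plan (``fix the minimizing $n^\star$ and bound $d_{n^\star}^2$ from below'') cannot produce the denominator $\sum_{n=m_1}^{m_2}b_n$, because $R_{n^\star}$ effectively sees only an $O(1)$ window of indices near $n^\star$ through the geometric weights. The working argument tests \emph{all} the $R_n$ of the block against the weights $b_n-\lambda^{-1}b_{n-1}$, which are non--negative precisely because $(\lambda^n b_n)$ is non--decreasing; exchanging the two sums makes $\sum_{n\leq j}(\lambda^n b_n-\lambda^{n-1}b_{n-1})$ telescope against the denominator $\lambda^j b_j$, giving $\sum_n(b_n-\lambda^{-1}b_{n-1})R_n\leq d_{m_1}^2$, while the monotonicity of $(b_n)$ bounds the total weight from below by a multiple of $\sum_{n=m_1}^{m_2}b_n$. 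You never invoke either monotonicity hypothesis on $(b_n)$, and both are essential here; your passing remark about comparing $R_n$ with $R_{n+1}$ does point at the right recursion $R_n=(d_n^2-d_{n+1}^2)/b_n+\lambda^{-1}R_{n+1}$, but it is not carried out.
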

\begin{proof}
  Since $\lambda^n b_n$ is non--decreasing, we know that
  $b_n - \lambda^{-1}b_{n-1}\geq0$. Hence by exchanging the sums,
  \begin{multline*}
      \sum_{n=m_1}^\infty\bigl(b_n - \lambda^{-1}b_{n-1}\bigr)R_n(t)=\\
        = \sum_{k=m_1}^\infty \frac{d_k(t)^2 - d_{k+1}(t)^2}{\lambda^k b_k}
          \sum_{n=m_1}^k \bigl(\lambda^n b_n - \lambda^{n-1}b_{n-1}\bigr)\leq\\
        \leq \sum_{k=m_1}^\infty (d_k(t)^2 - d_{k+1}(t)^2)
        \leq d_{m_1}(t)^2.
  \end{multline*}
  If $m_2\geq m_1$, since $(b_n)_{n\geq1}$ is non--increasing,
  \[
    \begin{aligned}
      \sum_{n=m_1}^{m_2} \bigl(b_n - \lambda^{-1}b_{n-1}\bigr)R_n(t)
        &\geq \min\{R_{m_1}(t),\dots,R_{m_2}(t)\}
          \sum_{n=m_1}^{m_2}\bigl(b_n - \lambda^{-1}b_{n-1}\bigr)\\
        &\geq\frac{\lambda-1}{\lambda}\Bigl(\sum_{n=m_1}^{m_2} b_n\Bigr)
          \min\{R_{m_1}(t),\dots,R_{m_2}(t)\}.
    \end{aligned}
  \]
  
  The claim $\liminf_n R_n(t)=0$ follows from \eqref{e:Rformula}, since
  $d_n(t)\leq d_1(t)$ for every $n$, and since, by the assumptions on
  $(b_n)_{n\geq1}$, we can find a sequence $(m_k)_{k\geq1}$ such that
  $\sum_{n=m_k}^{m_{k+1}-1}b_n\uparrow\infty$.
  
  To prove that $\bar d_n\downarrow0$, we notice that the sequence
  $(m_k)_{k\geq1}$ mentioned above does not depend on $t$, hence
  using the monotonicity of $(d_n(t))_{n\geq1}$ and formula
  \eqref{e:Rformula}, we can prove that $\liminf_n \bar d_n = 0$,
  and hence $\bar d_n\downarrow0$ by monotonicity.
  Once we know that $\bar d_n\downarrow0$, an easy and standard
  argument proves that $Q_n\to0$.
   
  To prove that $(Q_n)_{n\geq1}$ is eventually non--increasing, we
  notice that, since $(\bar d_n)_{n\geq1}$ is non--increasing,
  \[
    (Q_{n+1} - Q_n)
      = \frac1\lambda(Q_n - Q_{n-1}) + \frac1\lambda(\bar d_n - \bar d_{n-1})
      \leq \frac1\lambda(Q_n - Q_{n-1}).
  \]
  In view of the above inequality, it is sufficient to show that for some $m$ the
  increment $Q_m-Q_{m-1}\leq0$. This is true because otherwise the sequence
  $(Q_n)_{n\geq1}$ would be non--decreasing, in contradiction with $Q_n\to0$
  and $Q_n\geq0$.
\end{proof}
Given $\theta>0$ and $n_0\geq1$, define by recursion the
sequence
\begin{equation}\label{e:sequence}
  n_{k+1}
    = 2 + \min\Bigl\{n\geq n_k-1: \sum_{j=n_k-1}^n b_j\geq\theta\lambda^{-\frac{k}4}\Bigr\}.
\end{equation}
The definition of $Q_n$ and the fact that the sequence $(\bar d_n)_{n\geq1}$ is
non--increasing yield the following recursive formula for $Q_{n_k}$,
\begin{equation}\label{e:Qrecursive}
   Q_{n_{k+1}}
     = \frac1{\lambda^{n_{k+1}-n_k}}Q_{n_k}
       + \sum_{j=n_k}^{n_{k+1}-1}\frac{\bar d_j}{\lambda^{n_{k+1}-j}}
     \leq \frac1\lambda Q_{n_k} + c\bar d_{n_k},
\end{equation}
for a constant $c>0$ depending only from $\lambda$. Moreover,
if we choose $n_0$ large enough that $(Q_n)_{n\geq0}$ is
non--increasing,
\[
  d_{n_{k+1}}(t)^2
    \leq d_n(t)^2
    \leq F_n(0) + c_4 Q_n R_{n-2}(t)
    \leq F_{n_k}(0) + c_4 Q_{n_k} R_{n-2}(t)
\]
for each $n\in\{n_k+1,\dots,n_{k+1}\}$, hence
by formula \eqref{e:Rformula} and the definition of
the sequence $(n_k)_{k\geq1}$,
\begin{multline*}
  d_{n_{k+1}}(t)^2
    \leq F_{n_k}(0) + c_4 Q_{n_k}\min\{R_{n_k-1},\dots,R_{n_{k+1}-2}\}\leq\\
    \leq F_{n_k}(0) + c Q_{n_k}\frac{d_{n_k-1}(t)^2}{\sum_{n_k-1}^{n_{k+1}-2}b_j}
    \leq F_{n_k}(0) + c\frac{\lambda^{\frac{k}4}}{\theta}Q_{n_k}d_{n_k-1}(t)^2,
\end{multline*}
and in conclusion,
\begin{equation}\label{e:bdrecursive}
  \bar d_{n_{k+1}}^2
    \leq F_{n_k}(0)
      + c\frac{\lambda^{\frac{k}4}}{\theta}Q_{n_k}\bar d_{n_k-1}^2.
\end{equation}
\begin{lemma}[initial step of the cascade]\label{l:initial}
  Given $M>0$, there are $n_0\geq1$ and $\theta>0$ such that 
  \[
    \begin{gathered}
      Q_{n_k}
        \leq \lambda^{-\frac{k}2},\\
      \bar d_{n_k}^2
        \leq \lambda^{-Mk},
    \end{gathered}
  \]
  for all $k\geq0$.
\end{lemma}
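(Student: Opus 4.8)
The plan is to argue by an appropriate choice of $n_0$ and $\theta$, then prove both inequalities simultaneously by induction on $k$, feeding the recursive inequalities \eqref{e:Qrecursive} and \eqref{e:bdrecursive} into each other. First I would note that since $v(0)$ is smooth, $F_n(0)$ decays faster than any exponential, so in particular we may assume $F_{n_k}(0)\leq\frac12\lambda^{-M(k+1)}$ by taking $n_0$ large; this disposes of the inhomogeneous term in \eqref{e:bdrecursive}. For the base case $k=0$, recall $Q_n\to0$ and $\bar d_n\downarrow0$ (both established in the Lemma above), so choosing $n_0$ large enough gives $Q_{n_0}\leq1$ and $\bar d_{n_0}^2\leq1$, which are the required bounds at $k=0$.

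Next I would run the two inductions together. Assuming $Q_{n_k}\leq\lambda^{-k/2}$ and $\bar d_{n_k-1}^2\leq\bar d_{n_{k-1}}^2\leq\lambda^{-M(k-1)}$ (using monotonicity of $\bar d$, and being mildly careful that $n_k-1\geq n_{k-1}$, which holds by construction), plug into \eqref{e:bdrecursive}:
\[
  \bar d_{n_{k+1}}^2
    \leq \tfrac12\lambda^{-M(k+1)}
      + c\,\theta^{-1}\lambda^{k/4}\,\lambda^{-k/2}\,\lambda^{-M(k-1)}
    = \tfrac12\lambda^{-M(k+1)} + c\,\theta^{-1}\lambda^{-k/4}\lambda^{-M(k-1)}.
\]
We need the second term to be at most $\tfrac12\lambda^{-M(k+1)}$, i.e. $c\,\theta^{-1}\lambda^{-k/4}\lambda^{2M}\leq\tfrac12$ for all $k\geq0$; since $\lambda^{-k/4}\leq1$, it suffices to take $\theta\geq 2c\lambda^{2M}$. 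This closes the $\bar d$-induction. For the $Q$-induction, use \eqref{e:Qrecursive} with the just-proved (or inductive) bound $\bar d_{n_k}^2\leq\lambda^{-Mk}$, so $\bar d_{n_k}\leq\lambda^{-Mk/2}$:
\[
  Q_{n_{k+1}}
    \leq \tfrac1\lambda Q_{n_k} + c\,\bar d_{n_k}
    \leq \tfrac1\lambda\lambda^{-k/2} + c\,\lambda^{-Mk/2}.
\]
Provided $M\geq1$ (so $\lambda^{-Mk/2}\leq\lambda^{-k/2}$) and $\lambda$ is large enough — or after absorbing constants into a further enlargement of $n_0$ so that the factor multiplying $\lambda^{-k/2}$ is at most $\lambda^{-1/2}$ — one gets $Q_{n_{k+1}}\leq\lambda^{-(k+1)/2}$, closing that induction. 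A clean way to handle the constant $c$ uniformly is to note that it only appears with $\bar d_{n_k}$, which we may also assume as small as we like at $k=0$ by enlarging $n_0$, and then carry a slightly stronger inductive hypothesis such as $Q_{n_k}\leq\tfrac12\lambda^{-k/2}$.

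The main obstacle is the bookkeeping of how $\theta$, $n_0$ and the absolute constants $c,c_4,\lambda$ interlock: $\theta$ must be taken large (depending on $M$ and the constants) to kill the cross term in \eqref{e:bdrecursive}, but $\theta$ also controls, via \eqref{e:sequence}, how fast the blocks $[n_k-1,n_{k+1}-2]$ grow, and one must check this construction is consistent — in particular that $n_{k+1}$ is finite for every $k$, which is exactly where the divergence hypothesis $\sum_n b_n=\infty$ enters (it guarantees the partial sums eventually exceed $\theta\lambda^{-k/4}$). I would make the logical order explicit: fix $M$; choose $\theta=\theta(M)$ large enough for the $\bar d$-step; then choose $n_0=n_0(M,\theta)$ large enough that $(Q_n)$ is non--increasing past $n_0$, that $F_{n_0}(0)$ is tiny, and that the base-case smallness holds; and only then run the joint induction. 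Everything else is routine manipulation of geometric sums.
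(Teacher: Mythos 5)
This is essentially the paper's proof: a joint induction on $k$ feeding \eqref{e:Qrecursive} and \eqref{e:bdrecursive} into each other, with $n_0$ chosen large to make the base case small (for $Q_{n_0}$, $\bar d_{n_0}$ and $\lambda^{Mn}F_n(0)$) and $\theta$ chosen large to absorb the $\lambda^{k/4}/\theta$ factor in the $\bar d$-step, using $n_k-1\geq n_{k-1}$ and the monotonicity of $\bar d$ exactly as the paper does. The one loose point is closing the $Q$-induction for $k\geq1$: neither ``$\lambda$ large enough'' (it is fixed, $\lambda=2^{\alpha}$) nor ``enlarging $n_0$'' controls the term $c\,\bar d_{n_k}\leq c\,\lambda^{-Mk/2}$ there; the resolution the paper uses, and which your strengthened-hypothesis variant also requires, is to assume $M$ large without loss of generality (legitimate, since the conclusion for large $M$ implies it for smaller $M$), so that $c\,\lambda^{-Mk/2}\ll\lambda^{-(k+1)/2}$ for $k\geq1$.
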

\begin{proof}
  Without loss of generality we can choose $M$ large (depending only
  on the value of $\lambda$, see below at the end of the proof).
  Choose $n_0$ large enough that $(Q_n)_{n\geq n_0}$ is
  non--increasing and
  \[
    Q_{n_0-i}
      \leq\epsilon,
        \qquad
    \bar d_{n_0-i}
      \leq \epsilon,
        \quad
    i=0,1,
        \quad\text{and}\quad
    \lambda^{Mn}F_n(0)
      \leq\epsilon,
        \quad n\geq n_0,
  \]
  for a number $\epsilon\in(0,1)$ suitably chosen below.
  We will prove by induction that
  \begin{equation}\label{e:initial_claim}
    Q_{n_k-i}
      \leq \lambda^{-\frac12(k-i)},
        \qquad
    \bar d_{n_{k-i}}^2
      \leq \lambda^{-M(k-i)},
        \qquad
    i=0,1,
        \qquad
    k\geq1.
  \end{equation}
  For the initial step of the induction ($k=1$), we notice that
  by \eqref{e:Qrecursive} and \eqref{e:bdrecursive},
  \[
    \begin{gathered}
      Q_{n_1}
        \leq \frac1\lambda Q_{n_0}
          + c\bar d_{n_0}
        \leq \frac\epsilon\lambda + c\epsilon
        \leq \frac1{\lambda^{1/2}},\\
      \bar d_{n_1}^2
        \leq F_{n_0}(0) + \frac{c}\theta Q_{n_0}\bar d_{n_0-1}^2
        \leq \epsilon + \frac{c}{\theta}\epsilon^3
        \leq \lambda^{-M},\\
    \end{gathered}
  \]
  if we choose $\epsilon$ small enough, depending
  from the values of $\lambda$, $M$, and $\theta$.

  Assume now that \eqref{e:initial_claim} holds for some $k\geq1$,
  and let us prove that the same holds for $k+1$. To this end
  it is sufficient to give the estimate for $Q_{n_{k+1}}$
  and $\bar d_{n_{k+1}}^2$.
  Again by \eqref{e:Qrecursive}, \eqref{e:bdrecursive}
  and the induction hypothesis, and since $(n_k)_{k\geq0}$
  is increasing by definition,
  \[
    \begin{gathered}
      Q_{n_{k+1}}
        \leq \frac1\lambda Q_{n_k}
          + c \bar d_{n_k}
        \leq \lambda^{-\frac{k}2-1} + c\lambda^{-\frac{M}2k}
        \leq \lambda^{-\frac12(k+1)},\\
      \bar d_{n_{k+1}}^2
        \leq F_{n_k}(0)
          + c\frac{\lambda^{\frac{k}4}}{\theta}Q_{n_k}\bar d_{n_k-1}^2
        \leq \epsilon\lambda^{-Mk}
          + \frac{c}\theta \lambda^{-\frac{k}4}\lambda^{-M(k-1)}
        \leq \lambda^{-M(k+1)},
    \end{gathered}
  \]
  if $M$ is large (depending on $\lambda$), and $\epsilon$ is small
  and $\theta$ is large (depending only on $M$, $\lambda$).
\end{proof}
Before giving the last step of the proof of Theorem~\ref{t:main2ter},
we show a property of the sequence $(n_k)_{k\geq0}$. The proof is
the same as \cite[Lemma 11]{BarMorRom2014}, we detail it for
completeness.
\begin{lemma}\label{l:indices}
  Given $n_0\geq1$ and $\theta>0$, consider the sequence
  defined in \eqref{e:sequence}.
  For infinitely many $k$, $n_{k+1}=n_k+1$. In particular
  $b_{n_k-1}\geq\theta\lambda^{-k/4}$ for all such $k$.
\end{lemma}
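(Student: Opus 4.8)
The plan is a proof by contradiction: I will turn the failure of the conclusion into a convergent majorant for the divergent series $\sum_n b_n$.

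The first step is to isolate the elementary fact behind both assertions. Unwinding \eqref{e:sequence}, the set over which the minimum is taken consists of integers $n\ge n_k-1$, so $n_{k+1}\ge n_k+1$ for every $k$, and $n_{k+1}=n_k+1$ holds \emph{exactly} when the single index $n=n_k-1$ already realises the threshold, i.e. when $b_{n_k-1}=\sum_{j=n_k-1}^{n_k-1}b_j\ge\theta\lambda^{-k/4}$. This is precisely the ``in particular'' clause, so it suffices to prove that $n_{k+1}=n_k+1$ for infinitely many $k$. Two auxiliary remarks are recorded at this point: the minimising set is nonempty because $\sum_n b_n=\infty$ makes the partial sums $\sum_{j=n_k-1}^{n}b_j$ diverge as $n\to\infty$; and $n_{k+1}\ge n_k+1$ forces $n_k\uparrow\infty$.

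For the contradiction, suppose $n_{k+1}\ge n_k+2$ for all $k\ge K$. Fixing such a $k$, minimality of $n_{k+1}-2$ in \eqref{e:sequence}, together with $n_{k+1}-3\ge n_k-1$ (which holds since $n_{k+1}\ge n_k+2$), gives $\sum_{j=n_k-1}^{n_{k+1}-3}b_j<\theta\lambda^{-k/4}$; in particular $b_{n_{k+1}-3}<\theta\lambda^{-k/4}$, whence by monotonicity of $(b_n)$ also $b_{n_{k+1}-2}\le b_{n_{k+1}-3}<\theta\lambda^{-k/4}$, and therefore
\[
  \sum_{j=n_k-1}^{n_{k+1}-2}b_j<2\theta\lambda^{-k/4},\qquad k\ge K.
\]
The blocks $[n_k-1,\,n_{k+1}-2]$, $k\ge K$, are consecutive and pairwise disjoint (the $k$-th ends at $n_{k+1}-2$, the next starts at $n_{k+1}-1$), and since $n_k\uparrow\infty$ their union is the whole tail $\{j\ge n_K-1\}$. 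Summing the displayed inequality over $k\ge K$ yields
\[
  \sum_{j\ge n_K-1}b_j\le 2\theta\sum_{k\ge K}\lambda^{-k/4}<\infty,
\]
the last series being geometric with ratio $\lambda^{-1/4}<1$ since $\lambda=2^\alpha>1$. This contradicts $\sum_n b_n=\infty$. Hence $n_{k+1}=n_k+1$ for infinitely many $k$, and for each such $k$ the opening observation gives $b_{n_k-1}\ge\theta\lambda^{-k/4}$. (This is the scheme of \cite[Lemma 11]{BarMorRom2014}.)

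The argument is essentially bookkeeping, and the only place that needs genuine care is the off-by-a-constant index accounting around \eqref{e:sequence}: checking that $n_{k+1}-3\ge n_k-1$ so that minimality may be invoked, that consecutive blocks abut rather than overlap or leave gaps, and that together they exhaust a tail of $\natu$. Everything else — nonemptiness of the minimising set, $n_k\uparrow\infty$, and geometric summability — is immediate from the standing hypotheses on $(b_n)$ and from $\lambda>1$.
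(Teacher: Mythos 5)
Your proof is correct and follows essentially the same route as the paper's: a contradiction argument in which minimality in \eqref{e:sequence} bounds each block sum by $\theta\lambda^{-k/4}$, monotonicity of $(b_n)$ controls the boundary term $b_{n_{k+1}-2}$, and geometric summability contradicts $\sum_n b_n=\infty$. The only cosmetic difference is that you absorb the skipped index into the block and sum directly, whereas the paper splits the contradiction into two separate divergence/convergence claims; both are the same bookkeeping.
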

\begin{proof}
  Assume by contradiction that
  there is $r$ such that $n_{k+1}\geq n_k+2$ for $k\geq r$.
  On the one hand
  \[
    \sum_{j=n_k-1}^{n_{k+1}-3}b_j
      \leq \theta \lambda^{-\frac{k}4},
  \]
  and summing up in $k\geq r$ yields
  \[
    \sum_{k\geq r}\sum_{j=n_k-1}^{n_{k+1}-3}b_j
      <\infty
        \qquad\leadsto\qquad
    \sum_k b_{n_k-2}
      = \infty.
  \]
  On the other hand, $b_{n_k-2}\leq b_{n_k-3}\leq
  \theta\lambda^{-\frac14(k-1)}$ and the series
  $\sum_k b_{n_k-2}$ converges.
\end{proof}
\begin{lemma}[cascade recursion]
  For every $M>0$ there is $c_M>0$ such that
  \[
    \bar d_n^2
      \leq c_M \lambda^{-Mn},
        \qquad\quad
    Q_n
      \leq c_M \lambda^{-n}.
  \]
\end{lemma}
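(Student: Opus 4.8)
The plan is to reduce both estimates to the bound on $\bar d_n$, obtain the latter by transferring to all indices the decay that Lemma~\ref{l:initial} produces along the sparse sequence $(n_k)$ of \eqref{e:sequence}, and to this end to control the growth of $(n_k)$.

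First I note that the bound on $Q_n$ follows from a fast enough bound on $\bar d_n$. Indeed, if $\bar d_j^2\le c\,\lambda^{-M_1 j}$ for all $j$ with $M_1>2$, then $Q_n=\lambda^{-n}\sum_{j=0}^{n-1}\bar d_j\lambda^{j}\le\sqrt{c}\,\lambda^{-n}\sum_{j\ge0}\lambda^{(1-M_1/2)j}$, and the geometric series converges, so $Q_n\le c_M\lambda^{-n}$. Hence it is enough to prove $\bar d_n^2\le c_M\lambda^{-Mn}$ for every $M$. Fix $M$ and an auxiliary exponent $M'$ to be chosen at the end, depending only on $M$ and $\lambda$; apply Lemma~\ref{l:initial} with parameter $M'$ to obtain $n_0\ge1$, $\theta>0$ and the associated sequence $(n_k)$ of \eqref{e:sequence}, for which $Q_{n_k}\le\lambda^{-k/2}$ and $\bar d_{n_k}^2\le\lambda^{-M'k}$ for all $k\ge0$.

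The heart of the argument is to show that $(n_k)$ grows at most linearly, say $n_k\le C_\lambda k+C'$, with the rate $C_\lambda$ depending only on $\lambda$ (and $C'$ on $\lambda,\theta,n_0$). By \eqref{e:sequence} one has $n_{k+1}-n_k\ge1$ always, with equality precisely when $b_{n_k-1}\ge\theta\lambda^{-k/4}$ (call such $k$ \emph{good}); Lemma~\ref{l:indices} yields infinitely many good indices $k_1<k_2<\cdots$. For an index $i$ strictly between two consecutive good ones $k_j<k_{j+1}$, the minimality defining $n_{i+1}$ gives $\sum_{l=n_i-1}^{n_{i+1}-3}b_l<\theta\lambda^{-i/4}$, while the monotonicity of $(b_l)$ together with $b_{n_{k_{j+1}}-1}\ge\theta\lambda^{-k_{j+1}/4}$ bounds every $b_l$ in that range from below by $\theta\lambda^{-k_{j+1}/4}$; counting terms then yields $n_{i+1}-n_i\le1+\lambda^{(k_{j+1}-i)/4}$, and summing over $i\in[k_j,k_{j+1})$ gives $n_{k_{j+1}}-n_{k_j}\le (k_{j+1}-k_j)+\tfrac{\lambda^{1/4}}{\lambda^{1/4}-1}\lambda^{(k_{j+1}-k_j)/4}$. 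It remains to bound the gaps $k_{j+1}-k_j$, and this is exactly where the hypothesis $\sum_n b_n=\infty$ is indispensable: summing the block lower bounds $\sum_{l\in B_i}b_l\ge\theta\lambda^{-i/4}$ forces $\sum_i b_{n_i-2}=\infty$ (as in the proof of Lemma~\ref{l:indices}), and this, combined with the monotonicity of $(b_n)$ and of $(\lambda^n b_n)$ — which prevents $b$ from decaying faster than geometrically — keeps the bad blocks, equivalently the gaps between good indices, from growing, giving the claimed linear bound. I expect this step to be the main obstacle; the estimates involving $(n_k)$ and $(b_n)$ are the technical core and are also the reason why $\sum_n b_n=\infty$ is optimal.

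Finally I conclude as follows. For arbitrary $n$ let $k=k(n)$ be the largest index with $n_k\le n$; then $k\ge (n-C')/C_\lambda$, and the monotonicity of $(\bar d_n)_n$ (recalled in the earlier lemma) gives $\bar d_n^2\le\bar d_{n_k}^2\le\lambda^{-M'k}\le\lambda^{M'C'/C_\lambda}\,\lambda^{-(M'/C_\lambda)n}$. Choosing $M'=C_\lambda\max\{M,3\}$ makes $M'/C_\lambda\ge M$ and $M'>2$, so $\bar d_n^2\le c_M\lambda^{-Mn}$ with $c_M=\lambda^{M'C'/C_\lambda}$; plugging this (with $M_1=M'/C_\lambda>2$) into the first paragraph yields $Q_n\le c_M\lambda^{-n}$ as well, after enlarging $c_M$ if necessary. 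This proves the lemma.
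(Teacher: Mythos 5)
Your reduction of the $Q_n$ bound to the $\bar d_n$ bound is correct, and so is the final interpolation step via the monotonicity of $(\bar d_n)$ --- \emph{provided} the linear growth $n_k\leq C_\lambda k+C'$ actually holds. That claim is the crux of your argument, and it is not proved; you flag it yourself as ``the main obstacle''. Concretely: your own estimate gives $n_{k_{j+1}}-n_{k_j}\leq 2(k_{j+1}-k_j)+\tfrac{\lambda^{1/4}}{\lambda^{1/4}-1}\lambda^{(k_{j+1}-k_j)/4}$, which is \emph{exponential} in the gap between consecutive good indices. Lemma~\ref{l:indices} only guarantees infinitely many good indices, with no quantitative control on their spacing; the divergence $\sum_k b_{n_k-2}=\infty$ extracted from $\sum_n b_n=\infty$ is compatible (as far as your argument shows) with good indices as sparse as $k_j\sim j^2$, in which case summing your block bounds yields $n_K\lesssim K+\lambda^{\sqrt K/2}$, far from linear. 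The monotonicity of $(b_n)$ and $(\lambda^n b_n)$ does not obviously rescue this, and the assertion that $C_\lambda$ depends only on $\lambda$ (needed to break the circularity with the choice of $M'$, since $\theta$ and $n_0$ depend on $M'$) is likewise unsupported. So there is a genuine gap, not a routine technicality.

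The paper's proof is structured precisely to avoid any growth control on $(n_k)$. It uses Lemmas~\ref{l:initial} and~\ref{l:indices} only to find a \emph{single} good index $k_0$, large enough, at which simultaneously $b_{n_{k_0}-1}\geq\theta\lambda^{-k_0/4}$, $Q_{n_{k_0}}\leq\lambda^{-k_0/2}$ and $\bar d_{n_{k_0}}^2\leq\lambda^{-Mk_0}$. From there it runs a \emph{second} induction in the plain index $m$ (not along the sparse sequence), proving $\bar d_{n_{k_0}+m}^2\leq c\lambda^{-Mm}$ and $Q_{n_{k_0}+m}\leq c'\lambda^{-m}$ directly from \eqref{e:drecursive} and \eqref{e:Rformula}, with the denominator controlled by $b_{n_{k_0}-1+m}\geq\theta\lambda^{-k_0/4-m}$, which follows from the monotonicity of $(\lambda^n b_n)$ and the goodness of $k_0$ alone. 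The geometric loss $\lambda^{m}$ coming from $1/b$ is absorbed by the gain $\lambda^{-M(m-1)}$ from $\bar d^2$ because $M$ is large, and the induction closes because $c'\lambda^{k_0/4}\to0$ as $k_0\to\infty$. If you want to complete your write-up, you should replace the linear-growth step by an induction of this type; as it stands, the step you postpone is not a technical verification but the missing idea.
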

\begin{proof}
  There is no loss of generality if we assume $M$ is large.
  Let $n_0$, $\theta$ be the values
  provided by Lemma~\ref{l:initial}.
  By Lemma~\ref{l:initial} and Lemma~\ref{l:indices}
  there are infinitely many $k\geq1$ such that
  \begin{equation}\label{e:cascade_init}
    b_{n_k-1}
      \geq\theta \lambda^{-\frac{k}4},
        \qquad
    Q_{n_k}
      \leq \lambda^{-\frac{k}2},
        \qquad
    \bar d_{n_k}^2
      \leq \lambda^{-Mk}.
  \end{equation}
  Let $k_0$ be one of such indices, large enough (the size of $k_0$
  will be chosen at the end of the proof). We will prove
  by induction that
  \begin{equation}\label{e:cascade_recurse}
    \bar d_{n_{k_0}+m}^2
      \leq c \lambda^{-Mm},
        \qquad
    Q_{n_{k_0}+m}
      \leq c' \lambda^{-m},
        \qquad
    b_{n_{k_0}-1+m}
      \geq \theta \lambda^{-\frac{k_0}4-m},
  \end{equation}
  for a suitable choice of the constants $c>0$, $c'>0$. We first
  notice that there is nothing to prove concerning $b_{n_{k_0}-1+m}$,
  since this is a straightforward consequence of the choice of $k_0$
  and the monotonicity of $(\lambda^n b_n)_{n\geq1}$.
  
  The initial step $m=0$ holds, since inequalities \eqref{e:cascade_init}
  hold true for the index $k_0$. For $m=1$,
  \[
    \begin{gathered}
      \bar d_{n_{k_0}+1}^2
        \leq \bar d_{n_{k_0}}^2
        \leq c \lambda^{-M},\\
      Q_{n_{k_0}+1}
        = \frac1\lambda Q_{n_{k_0}} + \frac1\lambda\bar d_{n_{k_0}}
        \leq \frac1\lambda(\lambda^{-\frac{k_0}2} + \lambda^{-\frac{M}2k_0})
        \leq \frac{c'}\lambda,
    \end{gathered}
  \]
  if $c=\lambda^{-M(k_0-1)}$ and $c'\geq\lambda^{-k_0/2} + \lambda^{-Mk_0/2}$.
  
  Assume that \eqref{e:cascade_recurse} holds for $1,\dots,m$,
  for some $m\geq1$. By its definition,
  \[
    \begin{aligned}
      Q_{n_{k_0}+m+1}
        &= Q_{n_{k_0}}\lambda^{-(m+1)}
          + \sum_{j=n_{k_0}}^{n_{k_0}+m}\frac{\bar d_j}{\lambda^{n_{k_0}+m+1-j}}\\
        &\leq \lambda^{-\frac{k_0}2-(m+1)}
          + \sqrt{c}\lambda^{-(m+1)}\sum_{j=0}^m\lambda^{-(\frac{M}2-1)j}\\
        &\leq \Bigl(\lambda^{-\frac{k_0}2} + \frac\lambda{\lambda-1}\sqrt{c}\Bigr)
          \lambda^{-(m+1)},\\
        &\leq c' \lambda^{-(m+1)},
    \end{aligned}
  \]
  if $c'= \lambda^{-\frac{k_0}2} + \lambda(\lambda-1)^{-1}\sqrt{c}$
  (the previous constraint on $c'$ is met by this choice).
 
  By \eqref{e:drecursive} and \eqref{e:Rformula} we have that for
  every $n\geq2$,
  \[
    d_{n+1}(t)^2
      \leq F_{n+1}(0) + 04 Q_{n+1} R_{n-1}(t)
      \leq F_{n+1}(0) + c_4 Q_{n+1}\frac{\bar d_{n-1}^2}{b_{n-1}},
  \]
  hence, using the inequality for $Q_{n_{k_0}+m+1}$ already
  proved and the induction hypothesis,
  \[
    \begin{aligned}
      \bar d_{n_{k_0}+m+1}^2
        &\leq F_{n_{k_0}+m+1}(0)
           + c_4 Q_{n_{k_0+m+1}}\frac{\bar d_{n_{k_0}+m-1}^2}{b_{n_{k_0}+m-1}}\\
        &\leq c\lambda^{-M(m+1)}\Bigl(
           \lambda^{M(n_{k_0}+m+1)}F_{n_{k_0}+m+1}(0)
           + \frac{c_4}\theta c'\lambda^{2M+\frac{k_0}4}\Bigr)\\
        &\leq c 2^{-M(m+1)},
    \end{aligned}
  \]
  where the last inequality follows if $k_0$ is large enough, since
  $\lambda^n F_n(0)\to0$ by assumption, and by our choice of $c$, $c'$,
  we have that $\lambda^{k_0/4}c'\to0$
  as $k_0\to\infty$.
\end{proof}
%%
%%
%%
%%%%%%%%%%%%%%%%%%%%%%%%%%%%%%%%%%%%%%%%%%%%%%%%%%%%%%%%%%%%%%%%%%%%%%%%%%%%%%%%
\appendix
\section{Local existence and uniqueness}
  \label{s:localexuniq}

Consider the generalised system \eqref{e:glanse}, under the same
assumptions of Theorem~\ref{t:main2}. Assume\footnote{Existence
    and uniqueness can be proved also in the
    general case $m_1(k)\geq|k|^\alpha g(|k|)^{-1}$.
    A simple assumption that keeps our proof almost unchanged
    is a control from above, say $m(k)\leq |k|^\beta$, for some
    $\beta\geq\alpha$.},
for simplicity, that $m_1(k)=\frac{|k|^\alpha}{g(|k|)}$.
Denote by $V_m$ the subspace of $H^m$ (see \eqref{e:sobolev})
of divergence free vector fields with mean zero. 
Our main theorem on local existence and uniqueness for
\eqref{e:glanse} is as follows.
\begin{theorem}\label{t:localexuniq}
  Let $m\geq 2+\frac{d}2$ and $v_0\in V_m$. Then there
  are $T>0$ and a unique solution $v$ of \eqref{e:glanse}
  on $[0,T]$ with initial condition $v_0$ such that
  \begin{equation}\label{e:leu_reg}
    \begin{gathered}
      v\in L^\infty([0,T];V_m)\cap\textup{Lip}([0,T];V_{m-\alpha})
        \cap C([0,T];V_m^\textup{\tiny weak}),\\
      \int_0^T \|D_1^{\frac12} v\|_m^2\,dt
        <\infty,
    \end{gathered}
  \end{equation}
  where $V_m^\textup{\tiny weak}$ is the space $V_m$ with the
  weak topology. Moreover, $v$ is right--continuous with
  values in $V_m$ for the strong topology.
  
  If $T_\star$ is the maximal time of existence of the solution
  started from $v_0$, then either $T_\star=\infty$ or
  \[
    \limsup_{t\uparrow T_\star}\|v(t)\|_m
      = \infty.
  \]
\end{theorem}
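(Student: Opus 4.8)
The plan is to run a Galerkin approximation and close all estimates in $H^m$; this is the classical Kato-type local well-posedness argument, and the structural point is that the operator $D_2$ makes $u=D_2^{-1}v$ gain $\beta$ derivatives over $v$ (since $m_2(k)\ge|k|^\beta\ge1$ for $k\neq0$), so the nonlinearity loses at worst one derivative and no unbounded loss ever occurs. For $N\in\N$ let $P_N$ be the Fourier projector onto $\{k:|k|\le N\}$ and $\Pi$ the Leray projector, and consider the truncated system
\[
  \partial_t v^N + P_N\Pi\bigl[(u^N\cdot\nabla)v^N\bigr] + D_1 v^N = 0,\qquad v^N=D_2 u^N=D_2 P_N u^N .
\]
This is a quadratic, locally Lipschitz ODE on the finite--dimensional space of divergence--free mean--zero trigonometric polynomials of degree $\le N$, hence has a local solution.

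The key a priori estimate is obtained by pairing the truncated equation with $v^N$ in $H^m$. The pressure term vanishes because $\Div v^N=0$; the dissipative term contributes $-\|D_1^{1/2}v^N\|_m^2\le0$; and in the nonlinear term one writes $\Lambda^m\bigl((u^N\cdot\nabla)v^N\bigr)=u^N\cdot\nabla(\Lambda^m v^N)+[\Lambda^m,u^N\cdot\nabla]v^N$, with $\Lambda=(-\Delta)^{1/2}$, so that $\|\Lambda^m\cdot\|_{L^2}$ is equivalent to $\|\cdot\|_m$ on mean--zero fields. The first piece integrates to zero because $\Div u^N=0$ (which follows from $\Div v^N=0$ and $m_2>0$), and the commutator is bounded by the Kato--Ponce inequality $\|[\Lambda^m,u^N\cdot\nabla]v^N\|_{L^2}\le C\bigl(\|\nabla u^N\|_{L^\infty}\|v^N\|_m+\|u^N\|_m\|\nabla v^N\|_{L^\infty}\bigr)$. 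Since $m\ge2+\tfrac d2$, Sobolev embedding gives $\|\nabla v^N\|_{L^\infty}\le C\|v^N\|_m$, and $m_2(k)\ge|k|^\beta\ge1$ gives $\|u^N\|_s\le C\|v^N\|_s$ for every $s$; hence
\[
  \tfrac12\tfrac{d}{dt}\|v^N\|_m^2+\|D_1^{1/2}v^N\|_m^2\le C\|v^N\|_m^3 ,
\]
and a Riccati comparison yields a time $T=T(\|v_0\|_m)>0$ with $\sup_{[0,T]}\|v^N\|_m\le2\|v_0\|_m$ and $\int_0^T\|D_1^{1/2}v^N\|_m^2\,dt\le C$ uniformly in $N$; in particular every Galerkin solution extends to $[0,T]$.

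From the equation $\partial_t v^N$ is bounded in $L^\infty([0,T];V_{m-\alpha'})$ with $\alpha'=\max\{1,\alpha\}$ (the dissipation costs $\alpha$ derivatives, since $m_1(k)\le C|k|^\alpha$, the nonlinear term one, and the pressure none, the pressure solving $-\Delta p=\sum_{ij}\partial_i u^N_j\,\partial_j v^N_i$), so Banach--Alaoglu together with the Aubin--Lions--Simon lemma give, along a subsequence, $v^N$ converging weakly-$\ast$ to some $v$ in $L^\infty([0,T];V_m)$ and strongly to $v$ in $C([0,T];V_{m-\varepsilon})$ for every $\varepsilon>0$. Strong convergence handles the quadratic nonlinearity, so $v$ solves \eqref{e:glanse}, and weak lower semicontinuity gives $v\in L^\infty([0,T];V_m)$ and $\int_0^T\|D_1^{1/2}v\|_m^2\,dt<\infty$. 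Reading the equation again, $\partial_t v$ inherits the regularity of its terms, the dissipative one $D_1 v\in V_{m-\alpha}$ being the dominant loss in the regime of interest, whence $v\in\mathrm{Lip}([0,T];V_{m-\alpha})$; interpolating with $v\in L^\infty([0,T];V_m)$ gives $v\in C([0,T];V_m^{\mathrm{weak}})$, and strong right--continuity into $V_m$ follows from the standard argument combining weak continuity with $\limsup_{s\downarrow t}\|v(s)\|_m\le\|v(t)\|_m$, the latter read off the energy identity, where the dissipation only helps.

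For uniqueness, if $v_1,v_2$ are two solutions satisfying \eqref{e:leu_reg}, set $w=v_1-v_2$ and $z=u_1-u_2=D_2^{-1}w$, so that $\partial_t w+(u_1\cdot\nabla)w+(z\cdot\nabla)v_2+\nabla(p_1-p_2)+D_1 w=0$; pairing with $w$ in $L^2$ annihilates the pressure and the $(u_1\cdot\nabla)w$ term, keeps $-\|D_1^{1/2}w\|_{L^2}^2\le0$, and leaves $|\langle(z\cdot\nabla)v_2,w\rangle|\le\|z\|_{L^2}\|\nabla v_2\|_{L^\infty}\|w\|_{L^2}\le C\|\nabla v_2\|_{L^\infty}\|w\|_{L^2}^2$, where $\|z\|_{L^2}\le\|w\|_{L^2}$ again because $m_2\ge1$; since $\|\nabla v_2\|_{L^\infty}\in L^\infty([0,T])$, Gronwall forces $w\equiv0$ (the computation being justified by a Lions--Magenes type regularization, as $w\in L^\infty([0,T];V_m)\cap\mathrm{Lip}([0,T];V_{m-\alpha})$). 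Finally the blow--up alternative is the usual continuation argument: if $T_\star<\infty$ and $\limsup_{t\uparrow T_\star}\|v(t)\|_m<\infty$, then, since the existence time above depends only on the size of the data, one restarts the solution from times $t$ close to $T_\star$ and extends it beyond $T_\star$, contradicting maximality; hence $T_\star=\infty$ or $\limsup_{t\uparrow T_\star}\|v(t)\|_m=\infty$. The only genuine subtlety I foresee is the bookkeeping ensuring that, after the $\beta$--derivative gain in $u$ and using $\alpha+\beta\ge\tfrac{d+2}2$, the nonlinearity is no rougher than $D_1 v$, so that the stated $\mathrm{Lip}([0,T];V_{m-\alpha})$ regularity genuinely holds; everything else is routine.
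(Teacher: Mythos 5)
Your proposal is correct and follows essentially the same route as the paper's appendix: an $H^m$ energy estimate closed by Kato-type commutator/product bounds (using only $\|D_2^{-1}v\|_s\le\|v\|_s$ and the non-negativity of $D_1$, never any smoothing from either multiplier), a Riccati-type differential inequality giving an existence time depending only on $\|v_0\|_m$, compactness to pass to the limit, $L^2$--Gronwall uniqueness, right-continuity obtained from weak continuity combined with the upper bound $\limsup_{s\downarrow t}\|v(s)\|_m\le\|v(t)\|_m$ (plus restarting via uniqueness at interior times), and the usual continuation argument for the blow-up alternative. The only cosmetic difference is that you regularize by Fourier--Galerkin truncation while the paper follows the mollification scheme of Majda--Bertozzi; the bookkeeping subtlety you flag about $\mathrm{Lip}([0,T];V_{m-\alpha})$ versus $V_{m-\max\{1,\alpha\}}$ is already present in the paper's own statement and is immaterial for the regime ($\alpha\ge1$) used in the main theorems.
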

The proof of the theorem is based on a proof of existence of
a local unique solution for the Euler equation taken from
\cite[Section 3.2]{BerMaj2002}. The idea is that we cannot
use the $D_1$ operator as a replacement for the Laplacian,
since in general $D_1$ may not have smoothing properties
(indeed, it is easy to adapt the counterexample in
\cite[Remark 15]{BarMorRom2014} to $D_1$ on $\R^d$ or
on the $d$--dimensional torus). Likewise we do not use any smoothing
properties of $D_2$, so that our proof includes
the case $\beta=0$. The result is by no means optimal,
but fits the needs of our paper.

We work on the torus $[0,2\pi]^d$, although
the proof, essentially unchanged, works in $\R^d$.
Denote by $H$ the projection of $L^2([0,2\pi]^d)$
onto divergence free vector fields,
and for every $s>0$, by $V_s$
the projection of the Sobolev space $H^s([0,2\pi]^d)$
onto divergence free vector fields.
We will denote by $\|\cdot\|_H$ and by
$\scalar{\cdot,\cdot}_H$ the norm and the scalar product
in $H$, and by $\|\cdot\|_s$ and by
$\scalar{\cdot,\cdot}_s$ the norm and the scalar product
in $V_s$.

We denote by $\hat B(v_1,v_2)$ the (Leray) projection of the
non--linearity, namely
\[
  \hat B(v_1,v_2)
    = \Pi_\text{Leray}\bigl[\bigl((D_2^{-1}v_1\cdot\nabla\bigr) v_2\bigr].
\]
Since $\beta\geq0$, $\|D_2^{-1}v\|_s\leq\|v\|_s$ for
every $s\in\R$. Hence, (see for instance \cite{Kat1972},
or \cite{ConFoi1988}), for every $m\geq1+[\tfrac{d}2]$,
there exists $c_m>0$ such that
\[
  \begin{gathered}
    \|\hat B(v_1,v_2)\|_m	
      \leq c_m\|v_1\|_m\|v_2\|_{m+1},\\
    \scalar{\hat B(v_1,v_2), v_2}_m
      \leq c_m \|v_1\|_m\|v_2\|_m^2.
  \end{gathered}
\]

In the rest of the section we briefly outline the proof of
Theorem~\ref{t:localexuniq}, following \cite[Section 3.2]{BerMaj2002}.
The proof of the following result is a slight modification
of the arguments to prove \cite[Theorem 3.4]{BerMaj2002}.
\begin{proposition}\label{p:localex}
  Given an integer
  $m\geq 2 + \tfrac{d}2$, there exists a number
  $c_\star>0$ such that for every $v_0\in V_m$,
  if $T<c_\star/\|v_0\|_m$, there is a unique solution
  of \eqref{e:glanse} with initial condition $v_0$.
  Moreover $\vep\to v$ in $C([0,T];V_{m'})$, for $m'<m$,
  and in $C([0,T];V_m^\textup{\tiny weak})$,
  \eqref{e:leu_reg} hold for $v$, and
  for every $\epsilon>0$,
  \begin{equation}\label{e:bound}
    \sup_{[0,T]}\|\vep\|_m
      \leq \frac{\|v_0\|_m}{1 - c_\star T\|v_0\|_m}.
  \end{equation}
\end{proposition}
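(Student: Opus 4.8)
The plan is to run the vanishing--mollification scheme used for the Euler equations in \cite[Section 3.2]{BerMaj2002}, inserting the dissipation $D_1$ and the modified nonlinearity $\hat B$. The key structural facts are that $D_1$ is never inverted and never used as a smoothing operator: it enters only through the sign $\scalar{D_1 w,w}_s\geq0$ (its Fourier symbol being non--negative) and through its boundedness $D_1\colon V_s\to V_{s-\alpha}$; likewise $D_2$ enters only via $\|D_2^{-1}w\|_s\leq\|w\|_s$. Concretely I would fix a standard mollifier $J_\epsilon$ (truncating frequencies above $1/\epsilon$), keep $D_1$ unmollified, and solve the mild formulation
\[
  \vep(t)
    = e^{-tD_1}J_\epsilon v_0
      - \int_0^t e^{-(t-s)D_1}\,\Pi_{\text{Leray}}J_\epsilon\hat B\bigl(J_\epsilon\vep(s),J_\epsilon\vep(s)\bigr)\,ds,
\]
where $e^{-tD_1}$ is the $C_0$ contraction semigroup generated by the self--adjoint non--negative operator $D_1$ (a Fourier multiplier with symbol in $[0,1]$, hence a contraction on every $V_s$). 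Since $J_\epsilon$ smooths, $w\mapsto\Pi_{\text{Leray}}J_\epsilon\hat B(J_\epsilon w,J_\epsilon w)$ is a bounded quadratic map of $V_m$ into itself, so a contraction--mapping argument gives a unique local solution $\vep\in C^1([0,T_\epsilon);V_m)$.

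The a priori estimate is the heart of the matter. Differentiating $\|\vep\|_m^2$ and using $\Pi_{\text{Leray}}=\Pi_{\text{Leray}}^\ast$, $\Pi_{\text{Leray}}\vep=\vep$, $J_\epsilon=J_\epsilon^\ast$, the commutator bound $\scalar{\hat B(w,w),w}_m\leq c_m\|w\|_m^3$ recalled above (applied to $w=J_\epsilon\vep$, with $\|J_\epsilon\vep\|_m\leq\|\vep\|_m$), and the sign $\scalar{D_1\vep,\vep}_m=\|D_1^{1/2}\vep\|_m^2\geq0$, one obtains
\[
  \frac12\frac{d}{dt}\|\vep\|_m^2 + \|D_1^{1/2}\vep\|_m^2
    \leq c_m\|\vep\|_m^3.
\]
Comparing with $z'=c_m z^2$ yields, for $T<c_\star/\|v_0\|_m$ and a suitable $c_\star=c_\star(m,d)$, the bound \eqref{e:bound} uniformly in $\epsilon$; by the blow--up criterion $\vep$ then extends to all of $[0,T]$, and integrating the displayed inequality also gives $\int_0^T\|D_1^{1/2}\vep\|_m^2\,dt$ bounded uniformly in $\epsilon$.

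To pass to the limit, a standard energy estimate for the difference $\vep-v_{\epsilon'}$ in the low norm $\|\cdot\|_0$ — using the transport cancellation $\scalar{\hat B(w,z),z}_H=0$ for divergence--free $z$, the clean sign $\scalar{D_1(\vep-v_{\epsilon'}),\vep-v_{\epsilon'}}_H\geq0$, the uniform $V_m$--bounds, and the mollifier error estimates — gives, via Gronwall, that $(\vep)$ is Cauchy in $C([0,T];V_0)$; interpolating with the uniform $V_m$--bound upgrades this to convergence in $C([0,T];V_{m'})$ for every $m'<m$ and in $C([0,T];V_m^{\textup{weak}})$, and one passes to the limit in the equation to conclude that $v$ solves \eqref{e:glanse}. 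The remaining regularity in \eqref{e:leu_reg} is then standard: $v\in L^\infty([0,T];V_m)$ by weak--$\ast$ lower semicontinuity, $\int_0^T\|D_1^{1/2}v\|_m^2\,dt<\infty$ by Fatou, and $v\in\textup{Lip}([0,T];V_{m-\alpha})$ because $\partial_t v=-\hat B(v,v)-D_1 v$ lies in $L^\infty([0,T];V_{m-\alpha})$ (here $\hat B(v,v)\in V_{m-1}$ from the first quoted bound with $m$ replaced by $m-1$, $D_1\colon V_m\to V_{m-\alpha}$ is bounded, and the pressure gradient is removed by $\Pi_{\text{Leray}}$); the Lipschitz bound together with the uniform $V_m$--bound upgrades weak continuity to all of $[0,T]$, and combining $\limsup_{t\to0^+}\|v(t)\|_m\leq\|v_0\|_m$ (a consequence of the differential inequality above, which survives in the limit) with weak continuity into $V_m$ forces $\|v(t)\|_m\to\|v_0\|_m$, whence strong right--continuity in $V_m$ by norm plus weak convergence in a Hilbert space. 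Uniqueness — and hence convergence of the full family $\vep$, not merely of a subsequence — comes from the same $V_0$--energy argument applied to two solutions with the same datum, using $m\geq2+\tfrac{d}2$ to bound $\|v^{(i)}\|_{C^1}\leq c\,\|v^{(i)}\|_m$, followed by Gronwall.

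The main obstacle I anticipate is precisely this last bundle of limit--passage statements, and in particular the strong right--continuity in $V_m$ at $t=0$: since neither $D_1$ nor $D_2$ is smoothing, there is no parabolic shortcut as for Navier--Stokes, and everything must be squeezed out of the transport structure of $\hat B$ together with the single favourable sign of the dissipation.
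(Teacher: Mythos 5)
Your proposal follows essentially the same route as the paper: the paper's proof of this proposition is explicitly declared to be a slight modification of the arguments for \cite[Theorem 3.4]{BerMaj2002}, i.e.\ the mollified (Euler-type) approximation scheme in $V_m$ with the dissipation $D_1$ used only through its non-negative sign and $D_2$ only through $\|D_2^{-1}w\|_s\leq\|w\|_s$, which is precisely what you implement. Your a priori Riccati estimate giving \eqref{e:bound}, the low-norm Cauchy/interpolation limit passage, and the derivation of right-continuity from $\limsup_{t\downarrow0}\|v(t)\|_m\leq\|v_0\|_m$ together with weak continuity all match the paper's (sketched) argument and the accompanying lemma.
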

Unfortunately, at this stage, we cannot prove the analogous of
Theorem 3.5 of \cite{BerMaj2002} for our $v$, namely that
$v$ is continuous in time for the strong topology of $V_m$.
The reason is that their proof uses either the reversibility
of the Euler equation (that we do not have due to the presence
of $D_1$), or the smoothing of the Laplace operator, that
we do not have here either (as already mentioned). On
the other hand we can prove right--continuity.
\begin{lemma}
  The solution $v$ from Proposition~\ref{p:localex} is
  right--continuous with values in $V_m$ for the
  strong topology, and $\frac{d}{dt}v$ is right
  continuous with values in $V_{m-\alpha}$.
\end{lemma}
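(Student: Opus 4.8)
The plan is to combine the weak continuity already provided by Proposition~\ref{p:localex} (i.e.\ the properties \eqref{e:leu_reg}) with a one--sided in time energy estimate in $V_m$. Throughout set $M:=\sup_{[0,T]}\|v(t)\|_m<\infty$, and recall that $v(t)\in V_m$ for every $t\in[0,T]$ and $v\in C([0,T];V_m^{\textup{\tiny weak}})$. Applying the Leray projection to \eqref{e:glanse} gives $\frac{d}{dt}v=-\hat B(v,v)-D_1 v=:G$ as an identity in $V_{m-\alpha}$, valid for a.e.\ $t$; moreover $G\in L^\infty([0,T];V_{m-\alpha})$, since $\hat B$ is a bounded bilinear (hence continuous) map $V_m\times V_m\to V_{m-\alpha}$ and $D_1\colon V_m\to V_{m-\alpha}$ is bounded. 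The lemma will follow once we establish
\begin{equation}\label{e:onesided}
  \|v(t)\|_m^2\le\|v(t_0)\|_m^2+C\,(t-t_0),\qquad 0\le t_0\le t\le T,
\end{equation}
with $C$ depending only on $M$, $d$, $m$.

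To prove \eqref{e:onesided} I would mollify in the space variable. Let $J_\delta$ be a standard mollifier, a Fourier multiplier with symbol in $[0,1]$ tending pointwise to $1$ as $\delta\to0$; then $J_\delta$ commutes with $D_1$ and, for each fixed $\delta$, maps $V_{m-\alpha}$ boundedly into $V_m$. Hence $J_\delta v\in\textup{Lip}([0,T];V_m)$, so $t\mapsto\|J_\delta v(t)\|_m^2$ is absolutely continuous and, for a.e.\ $t$,
\[
  \tfrac12\tfrac{d}{dt}\|J_\delta v\|_m^2
    =-\langle J_\delta\hat B(v,v),J_\delta v\rangle_m-\|D_1^{1/2}J_\delta v\|_m^2,
\]
where I used that $\langle J_\delta D_1 v,J_\delta v\rangle_m=\|D_1^{1/2}J_\delta v\|_m^2\ge0$. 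The dissipative term is nonnegative and is discarded. For the nonlinear term, writing $u=D_2^{-1}v$ with $\|u\|_m\le\|v\|_m$, one uses $\Div u=0$ to cancel the leading--order contribution, and a Kato--Ponce type commutator estimate (as in the energy method behind \cite[Theorem~3.4]{BerMaj2002}) bounds what remains, uniformly in $\delta$, by $c\|v\|_m^3\le cM^3$. Integrating over $[t_0,t]$ gives $\|J_\delta v(t)\|_m^2\le\|J_\delta v(t_0)\|_m^2+2cM^3(t-t_0)$, and letting $\delta\to0$ --- using that $\|J_\delta w\|_m^2\to\|w\|_m^2$ for every $w\in V_m$ by dominated convergence on the Fourier side --- yields \eqref{e:onesided}.

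Right--continuity in $V_m$ then follows in a standard way. By \eqref{e:onesided}, $\limsup_{t\downarrow t_0}\|v(t)\|_m^2\le\|v(t_0)\|_m^2$; on the other hand $v(t)\to v(t_0)$ weakly in $V_m$ as $t\downarrow t_0$, so $\liminf_{t\downarrow t_0}\|v(t)\|_m\ge\|v(t_0)\|_m$ by weak lower semicontinuity of the norm. Thus $\|v(t)\|_m\to\|v(t_0)\|_m$ as $t\downarrow t_0$, and in the Hilbert space $V_m$ weak convergence together with convergence of norms forces strong convergence, i.e.\ $v(t)\to v(t_0)$ in $V_m$. For the time derivative, once $v$ is right--continuous into $V_m$, the map $t\mapsto G(t)=-\hat B(v(t),v(t))-D_1 v(t)$ is right--continuous into $V_{m-\alpha}$, by continuity of the bilinear map $\hat B\colon V_m\times V_m\to V_{m-\alpha}$ and boundedness of $D_1\colon V_m\to V_{m-\alpha}$; since $v(t)=v_0+\int_0^t G(s)\,ds$ with $G$ bounded and right--continuous, the right derivative of $v$ exists for every $t$ and equals $G(t)$, which is right--continuous with values in $V_{m-\alpha}$.

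The delicate point is \eqref{e:onesided}: since $\frac{d}{dt}v$ takes values only in $V_{m-\alpha}$, which is strictly rougher than $V_m$, one cannot legitimately differentiate $\|v(t)\|_m^2$, and the mollification together with the $\delta$--uniform commutator estimate is exactly the device that circumvents this. It is also this step that is intrinsically one--sided --- there is no matching backward estimate, since $D_1$ has the wrong sign and $D_2$ is not assumed to be smoothing --- which is precisely why one obtains right--continuity only, rather than continuity.
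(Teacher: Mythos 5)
Your proof is correct and follows essentially the same route as the paper: an upper bound $\limsup_{t\downarrow t_0}\|v(t)\|_m\le\|v(t_0)\|_m$ from the one--sided energy estimate, combined with weak continuity and weak lower semicontinuity of the norm, giving norm convergence and hence strong convergence in the Hilbert space $V_m$. The only cosmetic differences are that the paper obtains the upper bound at a general $t_0$ by restarting the solution there via uniqueness and quoting the a priori bound \eqref{e:bound}, whereas you re-derive the mollified energy inequality directly at $t_0$, and you spell out the (easy) right--continuity of $\frac{d}{dt}v$ which the paper leaves implicit.
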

\begin{proof}
  Given $t\in [0,T]$, the same computations leading
  to \eqref{e:bound} yield
  \[
    \sup_{[0,t]}\|v(s)\|_m
      \leq \|v_0\|_m + \frac{c_\star t\|v_0\|_m^2}{1 - c_\star t\|v_0\|_m},
  \]
  therefore $\limsup_{t\downarrow0}\|v(t)\|_m\leq \|v_0\|_m$.
  On the other hand, by weak continuity, $\|v_0\|_m\leq
  \liminf_{t\downarrow0}\|v(t)\|_m$ and $v$ is right
  continuous in $0$.
  Uniqueness for \eqref{e:glanse} and the same argument applied to
  $t\in(0,T]$ yield right--continuity in $t$.
\end{proof}
Nevertheless, we can still define a maximal solution and
a maximal time of existence. Given $v_0\in V_m$, let $T_\star$
be the maximal time of existence of the solution starting from $v_0$,
that is the supremum over all $T>0$ such that there exists
a solution $v$ of \eqref{e:glanse} on $[0,T]$ with $v(0)=u_0$,
$v$ is right--continuous with values in $V_m$, continuous
with values in $V_m^\textup{\tiny weak}$ and with
$\frac{d}{dt}v$ right continuous with values in $V_{m-\alpha}$.
Due to uniqueness, any two such solutions coincide on the
common interval of definition.
\begin{proposition}
  Given $v_0\in V_m$, if $T_\star$ is the maximal time of existence
  of the solution started from $v_0$, then either $T_\star=\infty$ or
  \[
    \limsup_{t\uparrow T_\star} \|v(t)\|_m
      = \infty.
  \]
\end{proposition}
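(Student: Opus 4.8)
The plan is to argue by contradiction via the standard continuation principle, using only the local existence result of Proposition~\ref{p:localex} (and the right--continuity already established). Suppose $T_\star<\infty$ but $\limsup_{t\uparrow T_\star}\|v(t)\|_m = M<\infty$. By the very definition of $\limsup$ there is $\tau\in(0,T_\star)$ with $\|v(t)\|_m\leq M+1$ for every $t\in(\tau,T_\star)$. Let $c_\star=c_\star(m)$ be the constant of Proposition~\ref{p:localex} and set $\delta:=\frac{c_\star}{2(M+1)}$, a quantity depending only on $m$ and $M$; note $\delta<c_\star/\|v(t)\|_m$ for every $t\in(\tau,T_\star)$.

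Next I would choose $t_0\in(\tau,T_\star)$ with $T_\star-t_0<\delta$. Since $\|v(t_0)\|_m\leq M+1$, Proposition~\ref{p:localex} applied with initial datum $v(t_0)$ and $T=\delta$ produces a solution $w$ of \eqref{e:glanse} on $[t_0,t_0+\delta]$ with $w(t_0)=v(t_0)$, satisfying \eqref{e:leu_reg} on that interval and right--continuous with values in $V_m$, with $\frac{d}{dt}w$ right--continuous in $V_{m-\alpha}$. By uniqueness (on the overlap $[t_0,\min\{T_\star,t_0+\delta\})$ both $v$ and $w$ are solutions with the same datum in the uniqueness class of Proposition~\ref{p:localex}) we have $w=v$ on $[t_0,T_\star)$, so the function $\tilde v$ defined as $v$ on $[0,t_0]$ and $w$ on $[t_0,t_0+\delta]$ is well defined and extends $v$. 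One then checks that $\tilde v$ solves \eqref{e:glanse} on $[0,t_0+\delta]$, is right--continuous with values in $V_m$ for the strong topology, continuous with values in $V_m^{\textup{\tiny weak}}$, and has $\frac{d}{dt}\tilde v$ right--continuous with values in $V_{m-\alpha}$: all requirements are inherited from the corresponding properties of $v$ and $w$, the only matching point being $t_0$, where $v(t_0^-)=v(t_0)=w(t_0)$ (weakly, and right--continuity at $t_0$ comes from $w$). Hence $\tilde v$ is a solution in the class defining $T_\star$ on an interval of length $t_0+\delta>T_\star$, contradicting the maximality of $T_\star$; therefore $\limsup_{t\uparrow T_\star}\|v(t)\|_m=\infty$ whenever $T_\star<\infty$.

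The step requiring the most care — rather than a genuine difficulty — is the gluing: one must verify that the concatenation $\tilde v$ really belongs to the function class used to define the maximal time of existence, i.e.\ that the regularity in \eqref{e:leu_reg} is inherited on compact subintervals and that the one--sided continuity conditions are compatible at $t_0$. This is precisely where the facts proved above (that $v$ is right--continuous in $V_m$, continuous in $V_m^{\textup{\tiny weak}}$, and has $\frac{d}{dt}v$ right--continuous in $V_{m-\alpha}$) are invoked, and with them the check is routine. A secondary but essential point is that the existence time $\delta$ supplied by Proposition~\ref{p:localex} depends only on $m$ and on the bound $M+1$ for the $V_m$--norm, hence does not shrink to zero as $t_0\uparrow T_\star$, which is exactly what makes the continuation possible.
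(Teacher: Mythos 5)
Your proof is correct and follows essentially the same route as the paper's: both restart the solution from a time $t_0$ close enough to $T_\star$ that the local existence time $c_\star/\|v(t_0)\|_m$ (uniform thanks to the assumed bound on $\|v\|_m$) carries the solution past $T_\star$, then invoke uniqueness to glue and contradict maximality. Your additional care with the $\limsup$ versus $\sup$ distinction and with verifying that the concatenation lies in the defining class is a harmless elaboration of the same argument.
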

\begin{proof}
  Assume by contradiction that $T_\star<\infty$ and that
  $M:=\sup_{t<T_\star}\|v(t)\|_m<\infty$. Let
  $T_0=T_\star-c_\star/(4M)$, and start a solution with
  initial condition $v(T_0)$ at time $T_0$. By
  Proposition~\ref{p:localex} there is a solution
  of \eqref{e:glanse} on a time span of length at least
  $c_\star/(2\|v(T_0)\|_m)\geq c_\star/(2M)$,
  hence at least up to time $T_0+c_\star/(2M)>T_\star$.
  By uniqueness, this solution is equal to $v$ up to
  time $T_\star$.
\end{proof}


\begin{thebibliography}{BMR14}

\bibitem[BMR11]{BarMorRom2011}
David Barbato, Francesco Morandin, and Marco Romito, \emph{Smooth solutions for
  the dyadic model}, Nonlinearity \textbf{24} (2011), 3083--3097.

\bibitem[BMR14]{BarMorRom2014}
David Barbato, Francesco Morandin, and Marco Romito, \emph{Global regularity
  for a logarithmically supercritical hyperdissipative dyadic equation},
  Dyn. Partial Differ. Equ. \textbf{11} (2014), no.~1, 39--52.

\bibitem[CF88]{ConFoi1988}
Peter Constantin and Ciprian Foia{\c{s}}, \emph{Navier-{S}tokes equations},
  Chicago Lectures in Mathematics, University of Chicago Press, Chicago, IL,
  1988. \MR{972259 (90b:35190)}

\bibitem[Kat72]{Kat1972}
Tosio Kato, \emph{Nonstationary flows of viscous and ideal fluids in {${\bf
  R}^{3}$}}, J. Functional Analysis \textbf{9} (1972), 296--305. \MR{0481652
  (58 \#1753)}

\bibitem[KT12]{KatTap2012}
Nets~Hawk Katz and Andrew Tapay, \emph{A note on the slightly supercritical
  {N}avier {S}tokes equations in the plane}, 2012, \arxiv{1206.3778}.

\bibitem[MB02]{BerMaj2002}
Andrew~J. Majda and Andrea~L. Bertozzi, \emph{Vorticity and incompressible
  flow}, Cambridge Texts in Applied Mathematics, vol.~27, Cambridge University
  Press, Cambridge, 2002. \MR{1867882 (2003a:76002)}

\bibitem[OT07]{OlsTit2007}
Eric Olson and Edriss~S. Titi, \emph{Viscosity versus vorticity stretching:
  global well-posedness for a family of {N}avier--{S}tokes-alpha-like models},
  Nonlinear Anal. \textbf{66} (2007), no.~11, 2427--2458. \MR{2312598
  (2008d:35173)}

\bibitem[Tao09]{Tao2009}
Terence Tao, \emph{Global regularity for a logarithmically supercritical
  hyperdissipative {N}avier-{S}tokes equation}, Anal. PDE \textbf{2} (2009),
  no.~3, 361--366. \MR{2603802 (2011b:35382)}

\bibitem[Tao14]{Tao2014}
Terence Tao, \emph{Finite time blowup for an averaged three-dimensional
  {N}avier-{S}tokes equation}, 2014, \arxiv{1402.0290}.

\bibitem[Yam12]{Yam2012}
Kazuo Yamazaki, \emph{On the global regularity of generalized {L}eray-alpha
  type models}, Nonlinear Anal. \textbf{75} (2012), no.~2, 503--515.
  \MR{2847435 (2012i:35320)}

\end{thebibliography}
\end{document}